\declaretheorem[numberwithin=section,name=Theorem]{theorem}
\declaretheorem[name=Lemma,sibling=theorem]{lemma}
\declaretheorem[name=Corollary,sibling=theorem]{corollary}
\declaretheorem[name=Definition,sibling=theorem,style=definition]{definition}
\declaretheorem[name=Proposition,sibling=theorem]{proposition}
\declaretheorem[name=Example,sibling=theorem,style=definition]{example}
\declaretheorem[name=Note,style=remark]{note}
\declaretheorem[name=Remark,sibling=note,style=remark]{remark}
\newenvironment{notation}[1][Notation]{\begin{trivlist}
\item[\hskip \labelsep {\bfseries #1}.]}
{\end{trivlist}}
\setlist[enumerate]{label=(\arabic*)}
\newlist{exercises}{enumerate}{1}
\setlist[exercises]{label=Ex. \arabic*}
\def \C {\mathbb{C}}
\newcommand{\Z}{\mathbb{Z}}
\newcommand{\thus}{{.\raise 4pt\hbox{.}.\;}}
\newcommand\mapsfrom{\mathrel{\reflectbox{\ensuremath{\mapsto}}}}
\DeclareMathOperator{\supp}{supp}
\DeclareMathOperator{\gl}{\mathfrak{gl}}
\DeclareMathOperator{\Aut}{Aut}
\DeclareMathOperator{\Frac}{Frac}
\DeclareMathOperator{\End}{End}
\DeclareMathOperator{\K}{\mathbbm{k}}
\begin{document}

\title{Maps between standard and principal flag orders}
\author{Erich C. Jauch}
\date{}

\address{Department of Mathematics \& Physics, Westminster College (Missouri), Fulton, MO-65251, USA}
\email{erich.jauch@westminster-mo.edu}
\urladdr{ecjauch.com}

\maketitle


\begin{abstract}
Galois orders, introduced in 2010 by V. Futorny and S. Ovsienko,
form a class of associative algebras that contain many important examples,
such as the enveloping algebra of $\gl_n$ (as well as its quantum deformation),
generalized Weyl algebras, and shifted Yangians. The main motivation for introducing Galois
orders is they provide a setting for studying certain infinite dimensional irreducible
representations, called Gelfand-Tsetlin modules. Principal Galois orders, defined by J.
Hartwig in 2017, are Galois orders with an extra property, which makes them easier to study.
All of the mentioned examples are principal Galois orders. In 2019, B. Webster defined
principal flag orders which in most situations are Morita equivalent to principal Galois orders, and further simplifies their study. This paper describes some techniques to connect the study pairs
of standard flag and Galois orders with related data. Such techniques include: a sufficient
condition for morphisms between standard flag and Galois orders to exist, a property of flag orders related to differential
operators on affine varieties, and constructing tensor products of standard and principal flag and Galois orders.
\end{abstract}

{\bf Keywords:} symmetric group, differential operators, Hecke algebras 

{\bf 2010 Mathematics Subject Classification:} 16G99, 17B10

\section{Introduction}\label{sec: introduction}

In 2010, Futorny and Ovsienko introduced the notion of a Galois order \cite{FO10}, a class of objects
consisting of pairs $(\mathscr{U},\Gamma)$ of an integral domain $\Gamma\subset\mathscr{U}$ an associative
(noncommutative) $\C$-algebra. These pairs generalizes the relation of $U(\gl_n)$ and its \emph{Gelfand-Tsetlin}
subalgebra $\C\left\langle\bigcup_{k=1}^nZ(U(\gl_k))\right\rangle$. Many important objects have been shown to be
members of this collection including: generalized Weyl algebras \cite{BavulaGWA},\cite{rosenberg_1995}, $U(\gl_n)$,
shifted Yangians and finite $W$-algebras \cite{FMO10}, $U_q(\gl_n)$ \cite{FH14}, Coloumb branches \cite{Webster19},
the spherical subalgebra of rational Cherednik algebras of imprimitive complex reflection groups $G(\ell,p,n)$ \cite{lepage2019rational}.
The primary motivation for creating these objects is to unify the study of their Gelfand-Tsetlin modules \cite{early_mazorchuk_vishnyakova_2018}, \cite{futorny_grantcharov_ramirez_2015},\cite{FUTORNY20183182},\cite{SilverthorneWebster20}.

The current research in this area has been focused on principal Galois orders (Definition \ref{def: principal and co-principal Galois orders}), as they
contain all of the examples of interest and the condition that
$X(\Gamma)\subseteq\Gamma$ is much easier to verify. In particular, in 2019, Webster
showed that most principal Galois orders are Morita equivalent to a \emph{principal flag
order} (Definition \ref{def: principal flag order}) which is a Galois order in which the $G$ is trivial and $\mathscr{M}$
is the semidirect product of the group and monoid from the original data (see Lemma 2.5
in \cite{Webster19}). In particular, the data is almost the same, except that $\Lambda$
is assumed to be Noetherian. These flag orders prove easier to study once more as they are
no longer subalgebras of group invariant algebras. One particular class of principal flag order is the standard flag order (Definition \ref{def: standard flag order}) which is the largest principal flag order with a given set of data as it contains every principal flag order with a given set of data.

One of the first directions that one takes after defining a new object is to describe morphisms between these objects. This is particularly important in order to study these objects in the realm of category theory. In this paper, we endeavor to take those first steps into constructing and studying related standard flag orders. We describe a sufficient condition for such maps to exist in Theorem \ref{thm: morphisms sufficient condition}. Also in Section \ref{sec: morphisms}, we prove that certain short exact sequences give rise to embeddings of standard flag orders (Theorem \ref{thm: standard order intersection}). This allows us to prove a property for standard flag orders similar to one of differential operators on polynomial rings (Theorem \ref{thm: standard order quotient}). In Section \ref{sec: tensors}, we construct tensor products of standard flag orders and prove the existence of a chain of embeddings (Theorem \ref{thm: tensor of std orders}). We use this result to show that principal flag orders (and principal Galois orders) are closed under tensor products (Corollaries \ref{cor: principal flag orders closed under tensor} and \ref{cor: principal galois orders closed under tensor} respectively).

\subsection{Galois orders}\label{sec: Galois Orders}
Galois orders were introduced in \cite{FO10}. We will be following
the set up from \cite{HARTWIG2020106806}. Let $\Lambda$ be an integrally closed
domain, $G$ a finite subgroup of $\Aut(\Lambda)$, and $\mathscr{M}$ a
submonoid of $\Aut(\Lambda)$. We will adhere to the following
assumptions for the entire paper:
\begin{align*}
{\rm (A1)}\quad & (\mathscr{M}\mathscr{M}^{-1})\cap G=1_{\Aut{}{\Lambda}} & \text{(\emph{separation})}\\
{\rm (A2)}\quad & \forall g\in G, \forall\mu\in\mathscr{M}\colon {}^g\mu=g\circ\mu\circ g^{-1}\in\mathscr{M} &
\text{(\emph{invariance})}\\
{\rm (A3)}\quad & \Lambda \text{ is Noetherian as a module over } \Lambda^G & \text{(\emph{finiteness})}
\end{align*}

Let $L=\Frac(\Lambda)$ and $\mathscr{L}=L\#\mathscr{M}$, the skew monoid ring, which is defined as the
free left $L$-module on $\mathscr{M}$ with multiplication given by
$a_1\mu_1\cdot a_2\mu_2=(a_1\mu_1(a_2))(\mu_1\mu_2)$ for $a_i\in L$ and $\mu_i\in\mathscr{M}$.
As $G$ acts on $\Lambda$ by automorphisms, we can easily extend this action to $L$, and by {\rm (A2)},
$G$ acts on $\mathscr{L}$. So we consider the following $G$-invariant subrings
$\Gamma=\Lambda^G$, $K=L^G$, and $\mathscr{K}=\mathscr{L}^G$.

A benefit of these assumptions is the following lemma.

\begin{lemma}[\cite{HARTWIG2020106806}, Lemma 2.1 (ii), (iv) \& (v)]\label{lem: Hartwig big lemma}
\item
\begin{enumerate}[\rm (i)]
\item $K=\Frac(\Gamma)$.
\item $\Lambda$ is the integral closure of $\Gamma$ in $L$.
\item $\Lambda$ is a finitely generated $\Gamma$-module and a Noetherian ring.
\end{enumerate}
\end{lemma}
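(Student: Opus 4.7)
The plan is to handle the three parts in order using standard techniques from the theory of finite group actions on commutative rings. Conditions (A1) and (A2) are not needed here, and (A3) only enters in (iii).

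For (i), the inclusion $\Frac(\Gamma) \subseteq L^G = K$ is immediate since $\Gamma \subseteq L^G$ and $L^G$ is a field. For the reverse, given $z \in K$, write $z = a/b$ with $a, b \in \Lambda$ and $b \neq 0$, and then clear the denominator in a $G$-invariant way by multiplying numerator and denominator by $\prod_{g \neq 1_G} g(b)$. The new denominator $N = \prod_{g \in G} g(b)$ is manifestly $G$-invariant, hence lies in $\Gamma$, and it is nonzero because $\Lambda$ is a domain and each $g$ is an automorphism. Since both $z$ and $N$ are $G$-invariant, the new numerator is also $G$-invariant, hence in $\Gamma$, proving $z \in \Frac(\Gamma)$.

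For (ii), the ``orbit polynomial'' trick does most of the work: each $\lambda \in \Lambda$ is a root of the monic polynomial $p_\lambda(X) = \prod_{g \in G}(X - g(\lambda))$, whose coefficients are the elementary symmetric functions in the orbit $\{g(\lambda)\}_{g \in G}$ and hence lie in $\Lambda^G = \Gamma$. This exhibits $\Lambda$ as integral over $\Gamma$. Combined with the hypothesis that $\Lambda$ is integrally closed in $L$ (so every element of $L$ integral over $\Gamma \subseteq \Lambda$ already lies in $\Lambda$), this forces $\Lambda$ to coincide with the integral closure of $\Gamma$ in $L$.

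Part (iii) is a direct consequence of (A3). Noetherianity of $\Lambda$ as a $\Gamma$-module implies that every $\Gamma$-submodule of $\Lambda$ is finitely generated, and in particular $\Lambda$ itself is a finitely generated $\Gamma$-module. For the ring statement, I would observe that any ascending chain of left (or right, or two-sided) ideals of $\Lambda$ is \emph{a fortiori} an ascending chain of $\Gamma$-submodules of $\Lambda$, and hence must stabilize. The main obstacle, if one can call it that, is purely bookkeeping in (i) to verify that the product $\prod_{g \in G} g(b)$ is nonzero; everything else reduces to standard invariant-theoretic arguments.
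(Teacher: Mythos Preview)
Your proof is correct and follows the standard invariant-theoretic arguments. Note, however, that the paper does not actually prove this lemma: it is quoted directly from \cite{HARTWIG2020106806}, Lemma~2.1, with no argument given. Your approach---clearing denominators via the orbit product $\prod_{g\in G} g(b)$ for (i), the monic orbit polynomial $\prod_{g\in G}(X-g(\lambda))$ for (ii), and the observation that ideals of $\Lambda$ are $\Gamma$-submodules for (iii)---is exactly the expected proof and is almost certainly what appears in the cited reference.
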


What follows are some definitions and propositions from \cite{FO10}.

\begin{definition}[\cite{FO10}]
A finitely generated $\Gamma$-subring $\mathscr{U}\subseteq\mathscr{K}$ is called a \emph{Galois $\Gamma$-ring}
(or \emph{Galois ring with respect to $\Gamma$}) if $K\mathscr{U}=\mathscr{U}K=\mathscr{K}$.
\end{definition}

\begin{definition}
Let $u\in\mathscr{L}$ such that $u=\sum_{\mu\in\mathscr{M}}a_\mu \mu$. The \emph{support of $u$ over $\mathscr{M}$} is the following:
\[
\supp u=\bigg\{\mu\in\mathscr{M}~\Big\vert~a_\mu\neq0\text{ for }u=\sum_{\mu\in\mathscr{M}}a_\mu \mu\bigg\}
\]
\end{definition}

\begin{proposition}[\cite{FO10}, Proposition 4.1]\label{prop: Gamma Ring Alt Conditions}
Assume a $\Gamma$-ring $\mathscr{U}\subseteq\mathscr{K}$ is generated by $u_1,\ldots,u_k\in \mathscr{U}$.
\begin{enumerate}[\rm (1)]
\item If $\bigcup_{i=1}^k\supp u_i$ generate $\mathscr{M}$ as a monoid, then $\mathscr{U}$ is a Galois ring.
\item If $L\mathscr{U}=L\#\mathscr{M}$, then $\mathscr{U}$ is a Galois ring.
\end{enumerate}
\end{proposition}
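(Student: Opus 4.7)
The plan is to prove (2) first via a $G$-averaging argument, then to deduce (1) from (2) by establishing $L\mathscr{U} = L\#\mathscr{M}$ under the support-generation hypothesis. For (2), the inclusion $K\mathscr{U} \subseteq \mathcal{K}$ is immediate since $K, \mathscr{U} \subseteq \mathcal{K}$ and $\mathcal{K}$ is a subring of $\mathcal{L}$. For the reverse, take $x \in \mathcal{K}$; by hypothesis $x \in \mathcal{L} = L\mathscr{U}$, so $x = \sum_i \ell_i v_i$ with $\ell_i \in L$ and $v_i \in \mathscr{U}$. Each $v_i$ is $G$-fixed (since $\mathscr{U} \subseteq \mathcal{K} = \mathcal{L}^G$), and so is $x$; applying the Reynolds operator $\mathcal{R} := |G|^{-1}\sum_{g\in G} g$ (valid because $|G|$ is invertible in $\C$) gives $x = \mathcal{R}(x) = \sum_i \mathcal{R}(\ell_i)\, v_i$ with $\mathcal{R}(\ell_i) \in L^G = K$, so $x \in K\mathscr{U}$. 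The identity $\mathscr{U}K = \mathcal{K}$ should follow from a symmetric averaging, after first deriving $\mathscr{U}L = L\#\mathscr{M}$ from $L\mathscr{U} = L\#\mathscr{M}$ via the $G$-equivariance of the action on $\mathcal{L}$.

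For (1), by (2) it suffices to prove $L\mathscr{U} = L\#\mathscr{M}$. Let $\mathcal{S} := \bigcup_{i=1}^k \supp u_i$, a finite subset of $\mathscr{M}$ generating $\mathscr{M}$ as a monoid, and for each $\mu \in \mathscr{M}$ let $n(\mu)$ denote the minimum length of a factorization of $\mu$ into elements of $\mathcal{S}$. I will show $\mu \in L\mathscr{U}$ by induction on $n(\mu)$. The base $n(\mu)=0$ gives $\mu = 1_\mathscr{M} \in \Gamma \subseteq \mathscr{U}$. For the inductive step, writing $\mu = \mu_1 \cdots \mu_n$ with $\mu_j \in \supp u_{i_j}$, the product $P := u_{i_1} \cdots u_{i_n}$ lies in $\mathscr{U}$ and expands in $\mathcal{L}$ as an $L$-combination of products $\nu_1 \cdots \nu_n$ over tuples $(\nu_j) \in \prod_j \supp u_{i_j}$; modulo terms whose length is strictly less than $n$ (which lie in $L\mathscr{U}$ by induction), the task reduces to isolating $\mu$ from an $L$-combination of length-exactly-$n$ elements of $\mathscr{M}$.

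The main obstacle is this leading-term extraction, since distinct tuples $(\nu_j)$ can yield the same product in $\mathscr{M}$, making the net coefficient of $\mu$ in $P$ a sum that could vanish. My plan to address this uses the commutator derivation $[\gamma, \cdot]$ for $\gamma \in \Gamma$: since $\gamma \in \mathscr{U}$, we have $[\gamma, P] \in \mathscr{U}$, and expanding gives $[\gamma, P] = \sum_\sigma (\gamma - \sigma(\gamma))\, c_\sigma\, \sigma$, which rescales each coefficient without leaving $\mathscr{U}$. By separation \hyperlink{A1}{(A1)}, distinct elements of $\mathscr{M}$ restrict to distinct automorphisms of $\Gamma$, so by Dedekind independence one can choose $\gamma_1, \ldots, \gamma_m \in \Gamma$ for which the matrix $(\sigma(\gamma_s))_{s, \sigma}$ has full rank on the relevant finite support. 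A Vandermonde-style inversion then isolates each coefficient $c_\sigma \sigma$ in $L\mathscr{U}$, yielding $\mu \in L\mathscr{U}$ whenever $c_\mu \neq 0$; the remaining technical point is guaranteeing non-vanishing of $c_\mu$ by a careful choice of product $P$ or by combining such extractions across different factorizations of $\mu$.
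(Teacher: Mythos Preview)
The paper does not supply a proof of this proposition; it is quoted from \cite{FO10} without argument, so there is nothing in the present paper to compare your attempt against. I can still comment on the attempt itself.

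Your averaging argument for the inclusion $\mathcal{K}\subseteq K\mathscr{U}$ in (2) is correct and is the standard one. The other side, $\mathscr{U}K=\mathcal{K}$, you defer to ``a symmetric averaging after first deriving $\mathscr{U}L=L\#\mathscr{M}$ from $L\mathscr{U}=L\#\mathscr{M}$''; this step is not automatic and deserves an actual argument (for instance, once you know every $\mu\in\mathscr{M}$ lies in $L\mathscr{U}$, the identity $\ell\mu=\mu\,\mu^{-1}(\ell)$ lets you rewrite left $L$-coefficients as right ones, giving $\mu\in\mathscr{U}L$ as well).

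For (1) there is a genuine gap, which you yourself flag: in the product $P=u_{i_1}\cdots u_{i_n}$ the coefficient $c_\mu$ of $\mu=\mu_1\cdots\mu_n$ can vanish, and nothing in your outline prevents this. The difficulty is avoidable rather than fixable. Your Vandermonde/commutator extraction already works cleanly when applied to a \emph{single} generator $u_i$: for $\mu\in\supp u_i$ the coefficient is nonzero by definition of support, so you obtain $\mu\in L\mathscr{U}$ for every $\mu\in S:=\bigcup_i\supp u_i$. The inductive step is then one line: if $\mu'\in L\mathscr{U}$ and $\mu\in S$, then
\[
\mu\cdot\mu'\in \mu\cdot L\mathscr{U}=\mu(L)\,\mu\,\mathscr{U}\subseteq L\cdot(L\mathscr{U})\cdot\mathscr{U}=L\mathscr{U},
\]
so every word in $S$ lies in $L\mathscr{U}$ and hence $\mathscr{M}\subseteq L\mathscr{U}$. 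This bypasses products of the $u_i$ entirely and removes the cancellation obstacle.
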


\begin{theorem}[\cite{FO10},  Theorem 4.1 (4)]\label{thm: Center of a Galois Ring}
Let $\mathscr{U}$ be a Galois $\Gamma$-ring. Then the center $Z(\mathscr{U})$ of the algebra
$\mathscr{U}$ equals $\mathscr{U}\cap K^{\mathscr{M}}$,
where $K^{\mathscr{M}}=\{k\in K\mid \mu(k)=k~\forall\mu\in\mathscr{M}\}$
\end{theorem}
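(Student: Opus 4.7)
The plan is to prove both inclusions in $Z(\mathscr{U})=\mathscr{U}\cap K^{\mathscr{M}}$ separately. The reverse inclusion is almost immediate: if $z\in K$ satisfies $\mu(z)=z$ for every $\mu\in\mathscr{M}$, then in $\mathcal{L}=L\#\mathscr{M}$ one has $\mu z=\mu(z)\mu=z\mu$ for every generator $\mu$, and $z\in L$ commutes with $L$ trivially. Hence $z$ is central in $\mathcal{L}$, a fortiori in $\mathscr{U}$.

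For the forward inclusion, write $z\in Z(\mathscr{U})$ as $z=\sum_{\mu\in\mathscr{M}}a_\mu\mu$ with $a_\mu\in L$. First I would show $z\in K$. Commuting $z$ with $a\in\Gamma\subseteq\mathscr{U}$ in $\mathcal{L}$ gives $0=az-za=\sum_\mu a_\mu(a-\mu(a))\mu$, and freeness of $\mathcal{L}$ as a left $L$-module on $\mathscr{M}$ forces $\mu(a)=a$ for every $\mu\in\supp z$ and every $a\in\Gamma$. By Lemma \ref{lem: Hartwig big lemma}(i), $K=\Frac(\Gamma)$, so such $\mu$ fix $K$ pointwise; combined with the standard fact that the faithful $G$-action makes $L/K$ a Galois extension with group $G$, this forces $\mu$ to agree on $L$ with some $g\in G$. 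Since both $\mu$ and $g$ lie in $\Aut(\Lambda)$ and agree on $L\supseteq\Lambda$, we obtain $\mu=g\in G\cap\mathscr{M}$, and the separation axiom \hyperlink{A1}{(A1)} then forces $\mu=1$. Thus $\supp z\subseteq\{1\}$, so $z\in L$, and since $\mathscr{U}\subseteq\mathcal{K}=\mathcal{L}^G$, in fact $z\in L^G=K$. Next I would promote $z$ from the center of $\mathscr{U}$ to the center of all of $\mathcal{K}$ via the Galois identity $\mathcal{K}=K\mathscr{U}$: $z\in K$ commutes with every $k\in K$ and with every $u\in\mathscr{U}$, hence with every $ku\in K\mathscr{U}=\mathcal{K}$. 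For each $\mu\in\mathscr{M}$ the orbit sum $\tilde\mu=\sum_{[g]\in G/\Stab_G(\mu)}{}^g\mu$ lies in $\mathcal{L}^G=\mathcal{K}$, so $z\tilde\mu=\tilde\mu z$; expanding both sides in the free $L$-basis $\mathscr{M}$ and using $({}^g\mu)(z)=g\mu(z)$ (because $z\in L^G$), the coefficient of $\mu$ itself yields $\mu(z)=z$.

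The step I expect to be the main obstacle is the Galois-theoretic reduction inside the forward inclusion: turning ``$\mu$ fixes $\Gamma$ pointwise'' into ``$\mu=1$'' rests on the assembly $K=\Frac(\Gamma)$, $L^G=K$, and $\Gal(L/K)=G$, which is precisely what Lemma \ref{lem: Hartwig big lemma} together with the faithfulness of $G\subseteq\Aut(\Lambda)$ and \hyperlink{A1}{(A1)} provide. Once that reduction is in hand, the remaining arguments are formal manipulations in the skew monoid ring $L\#\mathscr{M}$.
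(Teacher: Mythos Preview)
The paper does not supply a proof of this statement; it is quoted verbatim from \cite{FO10} as a background result, so there is no in-paper argument to compare against. Your proposal is a correct, self-contained proof using only the standing hypotheses \hyperlink{A1}{(A1)}--\hyperlink{A3}{(A3)}, Lemma~\ref{lem: Hartwig big lemma}, and the defining identity $K\mathscr{U}=\mathcal{K}$.

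A couple of minor comments on presentation. In the step where you conclude $\mu=g\in G$, it is worth saying explicitly that $G$ acts faithfully on $L$ (since $G\subseteq\Aut(\Lambda)$ and $\Lambda\subseteq L$), so Artin's theorem gives $\Gal(L/K)=G$; you allude to this but it could be stated more directly. In the final step, when you compare coefficients in $z\tilde\mu=\tilde\mu z$, you are implicitly using that the conjugates $\{{}^g\mu : [g]\in G/\Stab_G(\mu)\}$ are pairwise distinct in $\mathscr{M}$, which follows immediately from the definition of $\Stab_G(\mu)$ but is worth a half-sentence. Neither point is a gap; the argument stands as written.
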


\begin{definition}[\cite{FO10}]\label{def: Galois Order defintion}
A Galois $\Gamma$-ring $\mathscr{U}$ in $\mathscr{K}$ is a \emph{left} (respectively \emph{right})
\emph{Galois $\Gamma$-order in $\mathscr{K}$} if for any finite-dimensional left (respectively right)
$K$-subspace $W\subseteq\mathscr{K}$, $W\cap\mathscr{U}$ is a finitely generated left (respectively
right) $\Gamma$-module. A Galois $\Gamma$-ring $\mathscr{U}$ in $\mathscr{K}$ is a \emph{Galois
$\Gamma$-order in $\mathscr{K}$} if $\mathscr{U}$ is a left and right Galois $\Gamma$-order in
$\mathscr{K}$.
\end{definition}

\begin{definition}[\cite{DFO94}]\label{def: Harish-Chandra subalg}
Let $\Gamma\subset\mathscr{U}$ be a commutative subalgebra. $\Gamma$ is called a
\emph{Harish-Chandra subalgebra} in $\mathscr{U}$ if for any $u\in\mathscr{U}$, $\Gamma u\Gamma$
is finitely generated as both a left and as a right $\Gamma$-module.
\end{definition}

Let $\mathscr{U}$ be a Galois ring and $e\in\mathscr{M}$ the unit element. We denote
$\mathscr{U}_e=\mathscr{U}\cap Le$.

\begin{theorem}[\cite{FO10}, Theorem 5.2]\label{thm: Galois Order condition}
Assume that $\mathscr{U}$ is a Galois ring, $\Gamma$ is finitely generated and $\mathscr{M}$
is a group.
\begin{enumerate}[\rm (1)]
\item Let $m\in\mathscr{M}$. Assume $m^{-1}(\Gamma)\subseteq\Lambda$ (respectively $m(\Gamma)\subseteq\Lambda$).
Then $\mathscr{U}$ is right (respectively left) Galois order if and only if $\mathscr{U}_e$ is an
integral extension of $\Gamma$.
\item Assume that $\Gamma$ is a Harish-Chandra subalgebra in $\mathscr{U}$. Then $\mathscr{U}$ is
a Galois order if and only if $\mathscr{U}_e$ is an integral extension of $\Gamma$.
\end{enumerate}
\end{theorem}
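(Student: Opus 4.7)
The plan is to prove both parts by separating the easy forward implication (Galois order $\Rightarrow$ $\mathscr{U}_e$ integral over $\Gamma$) from the substantive reverse implication. The forward direction is a direct application of the Galois order definition: take the one-dimensional two-sided $K$-subspace $W=Le\subseteq\mathcal{K}$. If $\mathscr{U}$ is a right (resp.\ left) Galois order, then $\mathscr{U}_e=\mathscr{U}\cap Le$ is a finitely generated right (resp.\ left) $\Gamma$-module. Since $\Gamma\subseteq\mathscr{U}_e$ and $\Gamma$ is Noetherian, every element of $\mathscr{U}_e$ satisfies an integral relation over $\Gamma$, which handles the forward direction of both (1) and (2) at once.

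For the reverse direction of (1), suppose $\mathscr{U}_e$ is integral over $\Gamma$ and $m^{-1}(\Gamma)\subseteq\Lambda$ for some $m\in\mathscr{M}$. Given a finite-dimensional right $K$-subspace $W\subseteq\mathcal{K}$, the goal is to show $W\cap\mathscr{U}$ is a finitely generated right $\Gamma$-module. First I would reduce to a $W$ of the form $\sum_{i=1}^r K\mu_i$ for a finite family $\{\mu_1,\ldots,\mu_r\}\subseteq\mathscr{M}$, chosen so that every $u\in W\cap\mathscr{U}$ admits a canonical expansion along the $\mu_i$. Using that $\mathscr{M}$ is a group, I would right-translate $u$ by $\mu_i^{-1}$ (combined with multiplication by a carefully chosen element of $\Gamma$ and $G$-symmetrization) to produce elements of $\mathscr{U}_e$ whose coefficient at $e$ records the original $\mu_i$-coefficient of $u$ up to a controlled factor. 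The hypothesis $m^{-1}(\Gamma)\subseteq\Lambda$ is precisely what keeps these translated expressions inside $\mathcal{K}$ with coefficients in $\Lambda$. Integrality of $\mathscr{U}_e$ over $\Gamma$ together with Lemma~\ref{lem: Hartwig big lemma}(ii) then places each coefficient in $\Lambda$, and Lemma~\ref{lem: Hartwig big lemma}(iii) upgrades the bound $W\cap\mathscr{U}\subseteq\sum_i\Lambda\mu_i$ to finite generation as a right $\Gamma$-module.

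For the reverse direction of (2), the Harish-Chandra hypothesis provides, for each $u\in\mathscr{U}$, a finitely generated two-sided $\Gamma$-submodule $\Gamma u\Gamma\subseteq\mathscr{U}$. This replaces the distinguished element $m$ from part (1): the bimodule finiteness, combined with integrality of $\mathscr{U}_e$, lets the coefficient extraction run symmetrically on both sides, so that the argument above yields simultaneously the left and right Galois order conditions.

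The main obstacle I anticipate is the coefficient extraction itself. Because $\mathscr{U}\subseteq\mathcal{K}=\mathcal{L}^G$, an element $u=\sum_\mu a_\mu\mu$ is already $G$-symmetrized, so individual summands $a_\mu\mu$ typically do not lie in $\mathscr{U}$ and there is no naive projection of $\mathscr{U}$ onto a single $L\mu$-component. The technical heart of the proof is therefore to design a $G$-equivariant replacement that still isolates coefficients inside $\Lambda$ rather than merely in $L$; this is precisely where the hypotheses $m^{-1}(\Gamma)\subseteq\Lambda$ in (1), and the Harish-Chandra property in (2), are indispensable.
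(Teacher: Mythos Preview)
The paper does not contain a proof of this theorem; it is stated with the citation \cite{FO10}, Theorem 5.2, and no proof environment follows. There is therefore no ``paper's own proof'' to compare your proposal against---the result is simply imported as background from Futorny--Ovsienko.

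That said, a brief assessment of your sketch on its own merits: the forward direction is correct and standard (taking $W=Le$, which is finite-dimensional over $K$ since $[L:K]=|G|<\infty$, gives $\mathscr{U}_e$ as a finitely generated $\Gamma$-module, hence integral). For the reverse direction your outline captures the right shape of the argument---coefficient extraction via translation by $\mu^{-1}$, controlled by the hypothesis $m^{-1}(\Gamma)\subseteq\Lambda$ for all $m$, then bounding coefficients in $\Lambda$ via integrality and the integral closedness of $\Lambda$---but it remains a plan rather than a proof. In particular, the phrase ``combined with multiplication by a carefully chosen element of $\Gamma$ and $G$-symmetrization'' is exactly the step that carries all the work, and you have not specified what that element is or why the resulting expression lands in $\mathscr{U}_e$ rather than merely in $\mathcal{K}$. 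Similarly, in part (2) you assert that the Harish-Chandra property ``replaces'' the hypothesis of (1) without indicating how bimodule finiteness of $\Gamma u\Gamma$ actually feeds into coefficient control. If you intend to supply a full proof, those are the places requiring genuine argument; if you only intend to cite the result, as this paper does, then no proof is needed here.
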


The following are some useful results from \cite{HARTWIG2020106806}.

\begin{proposition}[\cite{HARTWIG2020106806}, Proposition 2.14]\label{prop: Gamma maxl comm in a Galois Order}
$\Gamma$ is maximal commutative in any left or right Galois $\Gamma$-order $\mathscr{U}$ in $\mathscr{K}$.
\end{proposition}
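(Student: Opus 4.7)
The plan is to show that any $u\in\mathscr{U}$ commuting with every element of $\Gamma$ must itself lie in $\Gamma$. Since $\mathscr{U}\subseteq\mathcal{K}\subseteq\mathcal{L}=L\#\mathscr{M}$, I expand $u=\sum_{\mu\in\mathscr{M}}a_\mu\mu$ with $a_\mu\in L$. For any $\gamma\in\Gamma$, computing $u\gamma-\gamma u$ in the skew monoid ring yields $\sum_\mu a_\mu(\mu(\gamma)-\gamma)\mu=0$, and comparing coefficients in the free left $L$-module $\mathcal{L}$ gives $a_\mu(\mu(\gamma)-\gamma)=0$ in $L$ for every $\mu$. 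Since $L$ is a field and $a_\mu\neq 0$ for $\mu\in\supp u$, each such $\mu$ fixes $\Gamma$ pointwise, hence fixes $K=\Frac(\Gamma)$ by Lemma \ref{lem: Hartwig big lemma}(i).

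Next I show $\supp u\subseteq\{e\}$. Each $\mu\in\supp u$ extends uniquely to an automorphism of $L$ fixing $K=L^G$. Because $G$ is a finite group acting faithfully on $L$, Artin's theorem gives that $L/K$ is a Galois extension with group $G$, so this extended automorphism equals some $g\in G$ on $L$, and restricting to $\Lambda$ shows $\mu=g$ inside $\Aut(\Lambda)$. Therefore $\mu\in\mathscr{M}\cap G\subseteq(\mathscr{M}\mathscr{M}^{-1})\cap G=\{1\}$ by separation (A1), so $\mu=e$ and $u=a_e\cdot e\in L$.

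Finally I upgrade $u\in L$ to $u\in\Gamma$. Since $u\in\mathcal{K}=\mathcal{L}^G$ and the $G$-action on $\mathcal{L}$ restricts to the usual action on $L\cdot e$, we get $u\in L^G=K$. Now take the one-dimensional $K$-subspace $W=Ke\subseteq\mathcal{K}$: by the Galois order hypothesis $W\cap\mathscr{U}=K\cap\mathscr{U}$ is a finitely generated $\Gamma$-module, so $u$ is integral over $\Gamma$. Because $\Lambda$ is integrally closed in $L$, the invariant ring $\Gamma=\Lambda^G$ is integrally closed in $K$: any $x\in K$ integral over $\Gamma$ is integral over $\Lambda$, hence lies in $\Lambda\cap K=\Lambda^G=\Gamma$. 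Thus $u\in\Gamma$, which proves maximal commutativity.

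The main obstacles I anticipate are the step identifying $\mu$ with an element of $G$, which relies on the Galois-theoretic identification $\Gal(L/K)=G$ together with the separation axiom to force $\mu=e$; and the final integrality step, which must invoke the full Galois order condition applied to the subspace $Ke$ (not merely the Galois ring condition) together with integral closedness of $\Gamma$ in $K$.
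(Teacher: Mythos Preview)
The paper does not give its own proof of this proposition; it simply cites \cite{HARTWIG2020106806}, Proposition~2.14. So there is nothing on the paper's side to compare against directly.

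That said, your argument is correct and is essentially the standard one. The three steps---(i) comparing coefficients in $L\#\mathscr{M}$ to see that every $\mu\in\supp u$ fixes $\Gamma$ and hence $K$; (ii) invoking Artin's theorem to identify such $\mu$ with an element of $G=\Gal(L/K)$ and then using (A1) to force $\mu=e$; (iii) applying the Galois order condition to the one-dimensional $K$-subspace $Ke$ to get integrality over $\Gamma$, then using that $\Gamma=\Lambda\cap K$ is integrally closed in $K$---are exactly the expected moves. Two small remarks: in step (iii) you can phrase the integrality conclusion without Noetherianity of $\Gamma$ by noting that $K\cap\mathscr{U}$ is a ring containing $1$, hence a faithful $\Gamma[u]$-module that is finitely generated over $\Gamma$; and the argument works verbatim for a right Galois order since $Ke$ is simultaneously a left and right $K$-subspace and everything in $K\cap\mathscr{U}$ is commutative.
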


\begin{lemma}[\cite{HARTWIG2020106806}, Lemma 2.16]\label{lem: Order Containment Implication}
Let $\mathscr{U}_1$ and $\mathscr{U}_2$ be two Galois $\Gamma$-rings in
$\mathscr{K}$ such that $\mathscr{U}_1\subseteq\mathscr{U}_2$. If
$\mathscr{U}_2$ is a Galois $\Gamma$-order, then so too is $\mathscr{U}_1$.
\end{lemma}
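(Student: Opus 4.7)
The plan is to verify the defining property of a Galois $\Gamma$-order for $\mathscr{U}_1$ directly from the analogous property for $\mathscr{U}_2$, with a small side-step to confirm that $\Gamma$ itself is Noetherian. By Definition \ref{def: Galois Order defintion}, the task is to show that for every finite-dimensional left (respectively right) $K$-subspace $W\subseteq\mathcal{K}$, the intersection $W\cap\mathscr{U}_1$ is a finitely generated left (respectively right) $\Gamma$-module. Since $\mathscr{U}_1$ is assumed to already be a Galois $\Gamma$-ring, this finiteness condition is the only remaining thing to check.

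First I would fix a finite-dimensional left $K$-subspace $W\subseteq\mathcal{K}$. Because $\mathscr{U}_2$ is a Galois $\Gamma$-order, $W\cap\mathscr{U}_2$ is finitely generated as a left $\Gamma$-module. The inclusion $\mathscr{U}_1\subseteq\mathscr{U}_2$ gives the containment of left $\Gamma$-submodules
\[
W\cap\mathscr{U}_1 \subseteq W\cap\mathscr{U}_2,
\]
so the problem reduces to showing that a $\Gamma$-submodule of a finitely generated $\Gamma$-module is finitely generated, i.e.\ that $\Gamma$ is a (left) Noetherian ring.

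For the Noetherianity step I would appeal directly to axiom \hyperlink{A3}{(A3)}: $\Lambda$ is Noetherian as a module over $\Gamma=\Lambda^G$. Any ideal of $\Gamma$ is in particular a $\Gamma$-submodule of $\Lambda$, so \hyperlink{A3}{(A3)} forces every such ideal to be finitely generated as a $\Gamma$-module, and hence as an ideal of $\Gamma$. Thus $\Gamma$ is Noetherian, and consequently every submodule of a finitely generated $\Gamma$-module is finitely generated. Applying this to the inclusion $W\cap\mathscr{U}_1\subseteq W\cap\mathscr{U}_2$ yields the left-sided condition; the right-sided argument is identical, starting from the right $\Gamma$-module finiteness of $W\cap\mathscr{U}_2$.

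I do not anticipate any genuine obstacle here. The substantive content of the lemma is just that ``being a Galois $\Gamma$-order'' is hereditary under subring inclusions of Galois $\Gamma$-rings, and once the Noetherianity of $\Gamma$ is recorded (itself a one-line consequence of \hyperlink{A3}{(A3)}) the rest is a direct unpacking of definitions. The only point I would be careful about is not to accidentally reprove that $\mathscr{U}_1$ is a Galois $\Gamma$-ring, which is given, and to make sure to handle both the left and right versions of the finite-generation hypothesis symmetrically.
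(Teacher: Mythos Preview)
Your argument is correct. The paper does not give its own proof of this lemma; it merely cites \cite{HARTWIG2020106806}, Lemma 2.16, so there is nothing to compare your approach against beyond noting that your proof is the natural one: the containment $W\cap\mathscr{U}_1\subseteq W\cap\mathscr{U}_2$ together with Noetherianity of $\Gamma$ (which you extract cleanly from \hyperlink{A3}{(A3)}) immediately yields the finite-generation condition on both sides.
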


It is common to write elements of $L$ on the right side of elements of $\mathscr{M}$.

\begin{definition}
For $X=\sum_{\mu\in\mathscr{M}}\mu\alpha_\mu\in\mathscr{L}$ and $a\in L$
defines the \emph{evaluation of $X$ at $a$} to be
\[
X(a)=\sum_{\mu\in\mathscr{M}}\mu(\alpha_\mu\cdot a)\in L.
\]
Similarly defined is \emph{co-evaluation} by
\[
X^\dagger(a)=\sum_{\mu\in\mathscr{M}}\alpha_\mu\cdot(\mu^{-1}(a))\in L
\]
\end{definition}

The following was independently defined by \cite{Vishnyakova17} called the \emph{universal ring}. 

\begin{definition}
The \emph{standard Galois $\Gamma$-order} is as follows:
\[
\mathscr{K}_\Gamma:=\{X\in\mathscr{K}\mid X(\gamma)\in\Gamma~\forall\gamma\in\Gamma\}.
\]
Similarly we define the \emph{co-standard Galois $\Gamma$-order} by
\[
{}_\Gamma{\mathscr{K}}:=\{X\in\mathscr{K}\mid X^\dagger(\gamma)\in\Gamma
~\forall\gamma\in\Gamma\}.
\]
\end{definition}

\begin{definition}\label{def: principal and co-principal Galois orders}
Let $\mathscr{U}$ be a Galois $\Gamma$-ring in $\mathscr{K}$. If $\mathscr{U}\subseteq\mathscr{K}_\Gamma$ (resp.
$\mathscr{U}\subseteq{}_\Gamma{\mathscr{K}}$), then $\mathscr{U}$ is called a \emph{principal} (resp.
\emph{co-principal}) \emph{Galois $\Gamma$-order}.
\end{definition}

In \cite{HARTWIG2020106806} it was shown that any (co-)principal Galois $\Gamma$-order is a
Galois order in the sense of Definition \ref{def: Galois Order defintion}.


\subsection{Flag orders}

In the notation of flag orders, the group $G$ is denoted by $W$ instead.

\begin{definition}\label{def: principal flag order}
A \emph{principal flag order} with data $(\Lambda,W,\mathscr{M})$ is a
subalgebra of $F\subset\Frac(\Lambda)\#(W\ltimes\mathscr{M})$ such that:
\begin{enumerate}[\rm (i)]
\item $\Lambda\#W\subset F$,
\item $\Frac(\Lambda)F=\Frac(\Lambda)\#(W\ltimes\mathscr{M})$,
\item For every $X\in F$, $X(\Lambda)\subset\Lambda$.\label{item: principal flag order property}
\end{enumerate}
\end{definition}

\begin{definition}\label{def: standard flag order}
The \emph{standard flag order} with data $(\Lambda,W,\mathscr{M})$ is the subalgebra
of all elements $X\in\Frac(\Lambda)\#(W\ltimes\mathscr{M})$ satisfying
(\ref{item: principal flag order property}) and is denoted $\mathcal{F}_\Lambda$.
\end{definition}

\begin{example}
Let $\Lambda=\C[x_1,x_2,\ldots,x_n]$, $W\leq GL(\C^n)$ a complex reflection group (e.g. $W=S_n$), $\mathscr{M}=\Z^n$. Then $\mathcal{F}_\Lambda$ is the degenerate double affine nilHecke algebra associated to $W$ \cite{KumarBook}.
\end{example}

Recall the definition of a standard flag order (see Definition \ref{def: principal flag order}). Let $(\Lambda,W,\mathscr{M})$
be our data, $\mathcal{F}=\Frac(\Lambda)\#(W\ltimes\mathscr{M})$, and $\mathcal{F}_\Lambda$ be the corresponding standard flag order. In this section, we study morphisms between standard flag orders. One motivation for this is future applications to representation theory, via restriction/induction functors.

\begin{notation}
For simplicity, $W\ltimes\mathscr{M}$ is written as $\hat{W}$.
\end{notation}

\begin{example}
If $\Lambda=\C[x_1,x_2,\ldots,x_n]$ and $\hat{W}$ is a finite complex reflection group action on $\C^n$, then $\mathcal{F}_\Lambda$ is the nilHecke algebra of $\hat{W}$ (see \cite{Webster19}).
\end{example}


\section{Morphisms}\label{sec: morphisms}

\subsection{A sufficient condition}

Let $(\Lambda_1,W_1,\mathscr{M}_1)$, $(\Lambda_2,W_2,\mathscr{M}_2)$ be two flag order data, $L_i$
the field of fractions of $\Lambda_i$ for $i=1,2$ and $\mathcal{F}_{\Lambda_i}$ denote the corresponding
standard flag orders. Recall in particular that $\hat{W}_i=W_i\ltimes\mathscr{M}_i$ acts faithfully on $\Lambda_i$.

\begin{theorem}\label{thm: morphisms sufficient condition}
Let $\varphi:\Lambda_1\to\Lambda_2$ be a ring homomorphism and $\psi:\hat{W}_1\to \hat{W}_2$ be a group homomorphism such that
\begin{equation}\label{eq:phipsi-condition}
\varphi\big(w(a)\big)=\psi(w)\big(\varphi(a)\big),\qquad \forall a\in\Lambda_1,\forall w\in \hat{W}_1.
\end{equation}
\begin{enumerate}[{\rm (i)}]
\item\label{item: sufficient condition algebra homomorphism}  There is an algebra homomorphism 
\begin{equation}
\Phi: L_1\# \hat{W}_1\to L_2\# \hat{W}_2
\end{equation}
given by
\begin{equation}
\Phi(fw)=\varphi(f)\psi(w),\qquad f\in L_1, w\in \hat{W}_1
\end{equation}

\item\label{item: sufficient condition map restricts to standard orders} Suppose there is a subspace $U$ of $\Lambda_2$ such that $\Lambda_2\cong \varphi(\Lambda_1)\otimes U$ as $\psi(\hat{W}_1)$-modules, where $\psi(\hat{W}_1)$ acts on $\varphi(a)\otimes u$ by 
\[
\psi(w)\big(\varphi(a)\otimes u\big) = \psi(w)\big(\varphi(a)\big)\otimes u = \varphi(w(a))\otimes u.
\]
Then $\Phi$ restricts to an algebra homomorphism
\begin{equation}
\Phi: \mathcal{F}_{\Lambda_1} \to \mathcal{F}_{\Lambda_2}
\end{equation}
\end{enumerate}
\end{theorem}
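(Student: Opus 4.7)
To give the formula $\Phi(fw)=\varphi(f)\psi(w)$ meaning on all of $L_1\#\hat{W}_1$, I first extend $\varphi$ to a ring homomorphism $L_1\to L_2$. Since $\Lambda_1,\Lambda_2$ are (integrally closed) domains and, in the setting where (ii) will be applied, $\Lambda_2\cong \varphi(\Lambda_1)\otimes U$ forces $\varphi$ to be injective, nonzero denominators map to units of $L_2$ and the extension is well defined; the intertwining identity \eqref{eq:phipsi-condition} passes from $\Lambda_1$ to $L_1$ by the usual quotient-rule computation. Because $L_1\#\hat{W}_1$ is a free left $L_1$-module on $\hat{W}_1$, the formula $\Phi\bigl(\sum_w f_w w\bigr):=\sum_w \varphi(f_w)\psi(w)$ defines a unique $\C$-linear map to $L_2\#\hat{W}_2$. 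The only remaining point is multiplicativity: for $f,f'\in L_1$ and $w,w'\in\hat{W}_1$,
\[
\Phi(fw\cdot f'w')=\Phi\bigl(f\,w(f')\,ww'\bigr)=\varphi(f)\varphi\bigl(w(f')\bigr)\psi(w)\psi(w')=\varphi(f)\,\psi(w)\bigl(\varphi(f')\bigr)\psi(w)\psi(w'),
\]
and the last expression equals $\varphi(f)\psi(w)\cdot\varphi(f')\psi(w')=\Phi(fw)\Phi(f'w')$ after using the skew-multiplication identity in $L_2\#\hat{W}_2$. The intertwining condition \eqref{eq:phipsi-condition} is used precisely once, to swap $\varphi\circ w$ for $\psi(w)\circ\varphi$.

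\textbf{Plan for (ii).} Let $X=\sum_w f_w w\in\mathcal{F}_{\Lambda_1}$, so $f_w\in L_1$ and $X(a)=\sum_w f_w\cdot w(a)\in\Lambda_1$ for every $a\in\Lambda_1$ (recovering the paper's evaluation convention for an element written with coefficients on the left). Fix $b\in\Lambda_2$. Using the hypothesis $\Lambda_2\cong\varphi(\Lambda_1)\otimes U$, write $b=\sum_i \varphi(a_i)\,u_i$ as a finite sum, where $a_i\in\Lambda_1$, $u_i\in U$, and where this equality is read both as the sum of pure tensors and as the corresponding product in $\Lambda_2$ (the decomposition is as $\varphi(\Lambda_1)$-modules with $\varphi(\Lambda_1)$ acting on the first factor, and as $\psi(\hat{W}_1)$-modules with trivial action on $U$). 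Then
\[
\Phi(X)(b)=\sum_w\varphi(f_w)\,\psi(w)(b)=\sum_i\sum_w \varphi(f_w)\,\varphi\bigl(w(a_i)\bigr)\,u_i=\sum_i \varphi\bigl(X(a_i)\bigr)\,u_i,
\]
where I have pulled $\psi(w)$ through $u_i$ using its trivial action on $U$, then applied \eqref{eq:phipsi-condition} and collected the sum over $w$ using $\varphi$'s multiplicativity on $L_1$. Since each $X(a_i)\in\Lambda_1$, each term lies in $\varphi(\Lambda_1)\cdot U\subset\Lambda_2$, so $\Phi(X)(b)\in\Lambda_2$ and hence $\Phi(X)\in\mathcal{F}_{\Lambda_2}$.

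\textbf{Where the real work lies.} Part (i) is essentially bookkeeping once $\varphi$ is extended to $L_1$; the calculation is a one-line use of the intertwining identity. The substantive step is (ii), and the delicate point is ensuring that the $\psi(\hat{W}_1)$-module decomposition $\Lambda_2\cong\varphi(\Lambda_1)\otimes U$ is compatible with the $\varphi(\Lambda_1)$-module structure on $\Lambda_2$ in a way that makes the intermediate product $\varphi(f_w)\cdot\varphi(w(a_i))\cdot u_i$ unambiguous: one needs that multiplication by $\varphi(\Lambda_1)$ acts through the first tensor factor, so that summing $\sum_w \varphi(f_w)\varphi(w(a_i))=\varphi(X(a_i))$ inside $\varphi(\Lambda_1)$ (using that $\varphi$ extends to $L_1$) and only then pairing with $u_i$ gives a genuine element of $\Lambda_2$. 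Once this compatibility is in hand, the containment $\Phi(\mathcal{F}_{\Lambda_1})\subseteq\mathcal{F}_{\Lambda_2}$ is immediate.
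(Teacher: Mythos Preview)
Your proposal is correct and follows the same route as the paper: part (i) checks that the skew relation $wf=w(f)w$ is carried to $\psi(w)\varphi(f)=\varphi(w(f))\psi(w)$ via \eqref{eq:phipsi-condition}, and part (ii) reduces to simple tensors $b=\varphi(a)\otimes u$ and computes $\Phi(X)(b)=\varphi(X(a))\otimes u\in\Lambda_2$. Your additional remarks about extending $\varphi$ injectively to $L_1$ and about the $\varphi(\Lambda_1)$-module compatibility of the tensor decomposition are points the paper's proof leaves implicit, not a different approach.
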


\begin{proof}
\ref{item: sufficient condition algebra homomorphism}
$L_i\# \hat{W}_i= L_i\otimes_{\K} \hat{W}_i$ as $(L_i,\hat{W}_i)$-bimodules, so it suffices to show that
$\Phi$ preserves the relation $wf=w(f)w$ for all $w\in \hat{W}_1, f\in L_1$. This relation is preserved iff
$\psi(w)\varphi(a)=\varphi(w(a))\psi(w)$ for all $w\in \hat{W}_1$ and $a\in \Lambda_1$. The left-hand side equals
$\psi(w)\big(\varphi(a)\big) \psi(w)$ so the identity is equivalent to \eqref{eq:phipsi-condition}.

\ref{item: sufficient condition map restricts to standard orders}
Let $X=\sum_{w\in \hat{W}_1}f_w w \in \mathcal{F}_{\Lambda_1}$. By assumption, any element of $\Lambda_2$ is a sum of elements of
the form $b=\varphi(a)\otimes u$, where $a\in\Lambda_1$ and $u\in U$. We have

\[
\Phi(X)(b) = \sum_{w\in \hat{W}_1} \varphi(f_w)\psi(w) \big( \varphi(a)\otimes u\big)
\]
By assumption on how $\psi(W_1)$ acts on such tensors, this equals

\[
\sum_{w\in \hat{W}_1} \varphi(f_w) \big(\varphi(w(a))\otimes u\big)=
\varphi\big(\sum_{w\in \hat{W}_1} f_w w(a)\big)\otimes u \in \varphi(\Lambda_1)\otimes U = \Lambda_2
\]

Thus $\Phi(X)\in\mathcal{F}_{\Lambda_2}$
\end{proof}

\begin{example}\label{ex: Klein 4->S4}
Consider two sets of flag order data. The first is $(\C[x,y],V,\mathbbm{1})$ where
$V=\langle \tau_x,\tau_y\rangle$ is the Klein four-group acting by $\tau_x\colon f(x,y)\mapsto f(-x,y)$ and 
$\tau_y\colon f(x,y)\mapsto f(x,-y)$. The second is $(\C[x_1,x_2,x_3,x_4],S_4,\mathbbm{1})$ $S_4$ is the
symmetric group on 4 elements acting on $\C[x_1,x_2,x_3,x_4]$ by permutation of variables.

Our two homomorphisms are:
\begin{align*}
\varphi&\colon\C[x,y]\rightarrow\C[x_1,x_2,x_3,x_4] & \text{by }& x\mapsto x_2-x_1\text{ and }y\mapsto x_4-x_3,\\
\psi&\colon V\rightarrow S_4 & \text{by } & \tau_x\mapsto (12)
\text{ and }\tau_y\mapsto (34).
\end{align*}
Together they both satisfy equation \ref{eq:phipsi-condition}. Our subspace to show part \ref{item: sufficient condition map restricts to standard orders} is $U=\C[x_1+x_2,x_3+x_4]$. Now by Theorem \ref{thm: morphisms sufficient condition} there is an algebra homomorphism
\[
\Phi\colon\mathcal{F}_{\C[x,y]}=\C[x,y]\langle\frac{1}{x}(\tau_x-\mathbbm{1}),\frac{1}{y}(\tau_y-\mathbbm{1})\rangle\rightarrow
\mathcal{F}_{\C[x_1,x_2,x_3,x_4]}=\C[x_1,x_2,x_3,x_4]\langle\frac{1}{x_{i+1}-x_i}((i,i+1)-\mathbbm{1})\mid i\in\{1,2,3\}\rangle
\]
that sends
\[
\frac{1}{x}(\tau_x-\mathbbm{1})\mapsto\frac{1}{x_2-x_1}((12)-\mathbbm{1})\quad\text{and}\quad\frac{1}{y}(\tau_y-\mathbbm{1})\mapsto\frac{1}{x_4-x_3}((34)-\mathbbm{1}).
\]
\end{example}

\begin{example}
Let $\Lambda=\C[x_1,x_2,\ldots,x_n]$, and $\mathcal{F}_\Lambda$ be the standard flag order corresponding
to $(\Lambda,S_n,\mathbbm{1})$ and $\widetilde{\mathcal{F}_\Lambda}$ be the standard flag order corresponding to
$(\Lambda,A_n,\mathbbm{1})$, where $S_n$ and $A_n$ are the symmetric group and alternating group respectively.
In this situation, $\varphi$ is the identity map, and $\psi\colon A_n\rightarrow S_n$ is the
natural embedding. This gives us  $\Phi\colon\widetilde{\mathcal{F}_\Lambda}\rightarrow\mathcal{F}_\Lambda$. Recall
that $\mathcal{F}_\Lambda$ is the nilHecke algebra of $S_n$ (see Example \ref{ex: standard order is nilhecke algebra}
above and \cite{Webster19}). Thus we will define $\widetilde{\mathcal{F}_\Lambda}$ as the nilHecke algebra of $A_n$.
\end{example}

\begin{example}\label{ex: U(gln) std flag order maps}
Define $\Lambda_n := \C[x_{ji}\mid 1\leq i\leq j\leq n]$, $\mathbb{S}_n=S_1\times S_2\times\cdots\times S_n$ where $S_j$
is the symmetric group on $j$ elements acting by permutation of the variables, $\mathscr{M}_n:=\Z^{n(n-1)/2}=\langle\delta^{ji}\mid 1\leq i\leq j\leq n-1\rangle$ written multiplicatively with the following action:
\[
\delta^{ji}(x_{k\ell})=x_{k\ell}-\delta_{jk}\delta_{i\ell}
\]
All of which come from the Galois order realization of $U(\gl_n)$ from \cite{FO10}. Let $\varphi_n\colon\Lambda_n\rightarrow\Lambda_{n+1}$
and $\psi\colon\mathbb{S}_n\rightarrow\mathbb{S}_{n+1}$ be the standard embeddings and observe that
$\Lambda_{n+1}=\varphi_n(\Lambda_{n})\otimes\C[x_{n+1,i}\mid 1\leq i\leq n+1]$. All of the conditions for
Theorem \ref{thm: morphisms sufficient condition} are met, so we have a map
$\Phi_n\colon\mathcal{F}_{\Lambda_n}\rightarrow\mathcal{F}_{\Lambda_{n+1}}$ where $\mathcal{F}_{\Lambda_n}$ is the standard
flag order with data $(\Lambda_n,\mathbb{S}_n,\mathscr{M}_n)$. Moreover,
\[
\mathcal{F}_{\Lambda_1}\xrightarrow{\Phi_1}\mathcal{F}_{\Lambda_2}\xrightarrow{\Phi_2}\mathcal{F}_{\Lambda_3}
\xrightarrow{\Phi_3}\cdots
\]
By Lemma 2.3 in \cite{Webster19} the standard Galois order $\mathcal{K}_{\Gamma_n}$ is isomorphic to
the spherical subalgebra $e_n\mathcal{F}_{\Lambda_n}e_n$ where
$e_n=\frac{1}{\#\mathbb{S}_n}\sum_{\sigma\in\mathbb{S}_n}\sigma$ is the symmetrizing idempotent. It was shown in \cite{HARTWIG2020106806} that $U(\gl_n)$ is a principle Galois order. Thus we have the following
\[
\begin{tikzpicture}
\node(F1)                           {$\mathcal{F}_{\Lambda_1}$};
\node(F2)   [right=0.75cm of F1]    {$\mathcal{F}_{\Lambda_2}$};
\node(F3)   [right=0.75cm of F2]    {$\mathcal{F}_{\Lambda_3}$};
\node(F4)   [right=0.75cm of F3]    {$\cdots$};
\node(K1)   [below=0.75cm of F1]    {$\mathcal{K}_{\Gamma_1}$};
\node(K2)   [right=0.75cm of K1]    {$\mathcal{K}_{\Gamma_2}$};
\node(K3)   [right=0.75cm of K2]    {$\mathcal{K}_{\Gamma_3}$};
\node(K4)   [right=0.75cm of K3]    {$\cdots$};
\node(U1)   [below=0.75cm of K1]    {$U(\gl_1)$};
\node(U2)   [below=0.75cm of K2]    {$U(\gl_2)$};
\node(U3)   [below=0.75cm of K3]    {$U(\gl_3)$};
\node(U4)   [below=0.95cm of K4]    {$\cdots$};

\foreach \i in {1,...,4} {
\draw[>=stealth,->,black] (K\i) -- (F\i);
\draw[>=stealth,->,black] (U\i) -- (K\i);
}
\foreach \x in {F,U} {
\draw[>=stealth,->,black] (\x1) -- (\x2);
\draw[>=stealth,->,black] (\x2) -- (\x3);
\draw[>=stealth,->,black] (\x3) -- (\x4);
}
\end{tikzpicture}
\]
\end{example}

The fact that the maps between the standard Galois orders exist in the previous example can be described in general by the following:

\begin{corollary}\label{cor: std Galois order morphism}
Given the setting of Theorem \ref{thm: morphisms sufficient condition} and additionally assume
that $\hat{W_2}=\psi(\hat{W_1})\times\hat{H}$ with
$(\psi(w),h)\in\hat{W_2}$ acting on $\varphi(a)\otimes u\in\Lambda_2=\varphi(\Lambda_1)\otimes U$ by

\begin{equation}\label{eq: direct product action on tensor}
(\psi(w),h)(\varphi(a)\otimes u)=\varphi(w(a))\otimes h(u), 
\end{equation}

the map $\Phi\colon\mathcal{F}_{\Lambda_1}\rightarrow\mathcal{F}_{\Lambda_2}$ restricts to their centralizer subalgebras
$\Phi\colon\mathcal{K}_{\Gamma_1}\rightarrow\mathcal{K}_{\Gamma_2}$
\end{corollary}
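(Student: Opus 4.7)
The plan is to verify directly that $\Phi(X) \in \mathcal{K}_{\Gamma_2}$ whenever $X \in \mathcal{K}_{\Gamma_1}$, using the characterization $\mathcal{K}_{\Gamma_i} = \{Y \in L_i\#\mathscr{M}_i \mid Y(\gamma) \in \Gamma_i~\forall \gamma \in \Gamma_i\}$. First I would observe that the direct product $\hat{W}_2 = \psi(\hat{W}_1)\times\hat{H}$, combined with the semidirect decompositions $\hat{W}_i = W_i\ltimes\mathscr{M}_i$, splits on each factor as $W_2 = \psi(W_1)\times H$ and $\mathscr{M}_2 = \psi(\mathscr{M}_1)\times\mathscr{M}_H$ (writing $\hat{H} = H\ltimes\mathscr{M}_H$). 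In particular $\psi(\mathscr{M}_1)\subseteq\mathscr{M}_2$, so $\Phi$ sends $L_1\#\mathscr{M}_1$ into $L_2\#\mathscr{M}_2$, making it meaningful to ask about the restriction to the standard Galois orders.

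Next I would describe $\Gamma_2$ explicitly. Because the action \eqref{eq: direct product action on tensor} of $W_2 = \psi(W_1)\times H$ on $\Lambda_2 = \varphi(\Lambda_1)\otimes U$ has the two direct-product factors acting on the two tensor slots respectively, taking invariants commutes with the tensor decomposition: $\Gamma_2 = \Lambda_2^{W_2} = \varphi(\Gamma_1)\otimes U^H$, using $\varphi(\Gamma_1) = \varphi(\Lambda_1)^{\psi(W_1)}$, which is immediate from the equivariance \eqref{eq:phipsi-condition}. Consequently every $\gamma \in \Gamma_2$ is a $\C$-linear combination of elements of the form $\varphi(a)\cdot u$ with $a\in\Gamma_1$ and $u\in U^H$; since evaluation is $\C$-linear in its argument, it suffices to verify $\Phi(X)(\varphi(a)u) \in \Gamma_2$ on such tensors.

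For the main evaluation, fix $X = \sum_\mu f_\mu\mu \in \mathcal{K}_{\Gamma_1}$ and $\gamma = \varphi(a)u$ as above. Because $\psi(\mu)$ lies in the first factor of the direct product, \eqref{eq: direct product action on tensor} with $h = 1$ yields $\psi(\mu)(\varphi(b)\cdot u) = \varphi(\mu(b))\cdot u$ for any $b\in L_1$, so
\[
\Phi(X)(\varphi(a)u) = \sum_\mu \psi(\mu)\bigl(\varphi(f_\mu a)\cdot u\bigr) = \sum_\mu \varphi(\mu(f_\mu a))\cdot u = \varphi(X(a))\cdot u.
\]
Since $a\in\Gamma_1$ and $X\in\mathcal{K}_{\Gamma_1}$, we have $X(a)\in\Gamma_1$, whence $\varphi(X(a))\cdot u \in \varphi(\Gamma_1)\otimes U^H = \Gamma_2$, establishing $\Phi(X)\in\mathcal{K}_{\Gamma_2}$.

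The main obstacle is the first step: promoting the abstract direct product $\hat{W}_2 = \psi(\hat{W}_1)\times\hat{H}$ to the finer compatibility with the semidirect-product structures, which is what supplies $\psi(\mathscr{M}_1)\subseteq\mathscr{M}_2$ and, in turn, the identification $\Gamma_2 = \varphi(\Gamma_1)\otimes U^H$. Once these are in hand, the evaluation calculation itself is purely formal given the action formula \eqref{eq: direct product action on tensor}.
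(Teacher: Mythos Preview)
Your approach is correct in substance and genuinely different from the paper's. The paper works through Webster's identification $\mathcal{K}_{\Gamma_i}\cong e_i\mathcal{F}_{\Lambda_i}e_i$ with the symmetrizing idempotents $e_i$: it factors $e_2$ as $\frac{1}{\#H}\sum_{h}(\psi(e_1),h)$, applies Theorem~\ref{thm: morphisms sufficient condition} to get $e_2\Phi(\mathcal{F}_{\Lambda_1})e_2\subset e_2\mathcal{F}_{\Lambda_2}e_2\cong\mathcal{K}_{\Gamma_2}$, and then argues that $e_2\mathcal{F}_{\Lambda_1}e_2\cong\mathcal{K}_{\Gamma_1}$ via the action formula~\eqref{eq: direct product action on tensor}. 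Your route is more elementary: you bypass the idempotent machinery entirely and verify the defining evaluation condition for $\mathcal{K}_{\Gamma_2}$ directly, after first identifying $\Gamma_2=\varphi(\Gamma_1)\otimes U^H$. Both arguments rest on the same unstated refinement, namely that the direct-product decomposition $\hat{W}_2=\psi(\hat{W}_1)\times\hat{H}$ respects the semidirect structure so that $W_2=\psi(W_1)\times H$ and $\psi(\mathscr{M}_1)\subset\mathscr{M}_2$; you at least flag this explicitly as an obstacle.

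One small gap to close: checking $\Phi(X)(\Gamma_2)\subset\Gamma_2$ is not enough by itself, since $\mathcal{K}_{\Gamma_2}$ is defined inside $\mathcal{K}_2=(L_2\#\mathscr{M}_2)^{W_2}$, and you only place $\Phi(X)$ in $L_2\#\mathscr{M}_2$. You need a line verifying $W_2$-invariance of $\Phi(X)$. This is routine: $\psi(W_1)$-invariance follows from the $W_1$-invariance of $X$ together with the equivariance~\eqref{eq:phipsi-condition}, while $H$-invariance holds because $H$ commutes with $\psi(\mathscr{M}_1)$ (direct product) and fixes $\varphi(L_1)$ pointwise (take $w=1$ in~\eqref{eq: direct product action on tensor}). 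Also, your displayed computation is written as though $X=\sum_\mu \mu f_\mu$ with right coefficients, matching the paper's evaluation convention, though you introduced $X$ as $\sum_\mu f_\mu\mu$; the final expression $\varphi(X(a))\cdot u$ is correct either way.
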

\begin{proof}
By assumption, $\hat{W_2}=\psi(\hat{W_1})\times\hat{H}$. As such, $\#W_2 = \#W_1\cdot\#H$
and
\[
e_2=\frac{1}{\#W_2}\sum_{h\in\hat{H}}\sum_{w\in W_1}(\psi(w),h)=\frac{\#W_1}{\#W_2}\sum_{h\in\hat{H}}(\psi(e_1),h)
=\frac{1}{\#H}\sum_{h\in\hat{H}}(\psi(e_1),h)
\]
where 
\[
(\psi(e_1),h)=\frac{1}{\#W_1}\sum_{w\in W_1}(\psi(w),h)\quad\text{for }h\in\hat{H}.
\]
By Theorem \ref{thm: morphisms sufficient condition} and Lemma 2.3 from \cite{Webster19},
we have
\[
\mathcal{K}_{\Gamma_2}\cong e_2\mathcal{F}_{\Lambda_2}e_2\supseteq e_2\mathcal{F}_{\Lambda_1}e_2.
\]
We claim that $e_2\mathcal{F}_{\Lambda_1}e_2\cong\mathcal{K}_{\Gamma_1}$. This is clear by the observation
that $e_2=\frac{1}{\#\hat{H}}(\psi(e_1),h)$ made at the beginning of this proof and the required action
of $(\psi(e_1),h)$ from (\ref{eq: direct product action on tensor}).
\end{proof}

\subsection{Split short exact sequences}

We show that certain short exact sequences:
\[
0\rightarrow I\rightarrow\Lambda_2\rightarrow\Lambda_1\rightarrow0,
\]
give rise to embeddings of standard flag orders.

\begin{theorem}\label{thm: standard order intersection}
Let $(\Lambda_1,W_1,\mathscr{M}_1)$ and $(\Lambda_2,W_2,\mathscr{M}_2)$ be
flag order data and $\mathcal{F}_{\Lambda_1},\mathcal{F}^\prime_{\Lambda_2}$ be the
corresponding standard flag orders such that the following are true:
\begin{itemize}
\item $\Lambda_2=\Lambda_1\oplus I$, where $I=(a_1,a_2,\ldots,a_n)$ is an ideal of $\Lambda_2$,
\item there are embeddings $W_1\rightarrow W_2$ and
$\mathscr{M}_1\rightarrow\mathscr{M}_2$ inducing an embedding $\hat{W}_1\rightarrow\hat{W}_2$ that
satisfies the Condition \ref{eq:phipsi-condition} with the natural embedding of $\Lambda_1\rightarrow\Lambda_2$,
\item for every $w\in\hat{W}_1$ and $a_i\in I=(a_1,a_2,\ldots,a_n)$, $w(a_i)=a_i$.
\end{itemize}
Then $\mathcal{F}_{\Lambda_2}\cap\mathcal{F}_1=\mathcal{F}_{\Lambda_1}$. In particular, $\mathcal{F}_{\Lambda_1}\hookrightarrow\mathcal{F}_{\Lambda_2}$.
\end{theorem}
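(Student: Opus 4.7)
I will prove $\mathcal{F}_{\Lambda_2}\cap\mathcal{F}_1=\mathcal{F}_{\Lambda_1}$ by double inclusion, first viewing $\mathcal{F}_1=L_1\#\hat{W}_1$ as a subalgebra of $\mathcal{F}_2=L_2\#\hat{W}_2$ via Theorem \ref{thm: morphisms sufficient condition}(\ref{item: sufficient condition algebra homomorphism}) applied to the natural inclusion $\varphi\colon\Lambda_1\hookrightarrow\Lambda_2$ and the given embedding $\psi\colon\hat{W}_1\hookrightarrow\hat{W}_2$; the resulting $\Phi$ is injective because the elements $\psi(w)$ are distinct and hence $L_2$-linearly independent in $\mathcal{F}_2$. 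The two engines that drive both inclusions are the additive decomposition $\Lambda_2=\Lambda_1\oplus I$ and the triviality of the $\hat{W}_1$-action on $I$.

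\textbf{Step 1: $\mathcal{F}_{\Lambda_1}\subseteq\mathcal{F}_{\Lambda_2}$.} Given $X=\sum_{w\in\hat{W}_1}f_w w\in\mathcal{F}_{\Lambda_1}$ and $b\in\Lambda_2$, I decompose $b=a+c$ with $a\in\Lambda_1$ and $c\in I$. Linearity of evaluation gives $X(b)=X(a)+X(c)$. By definition of $\mathcal{F}_{\Lambda_1}$, $X(a)\in\Lambda_1\subseteq\Lambda_2$. For $X(c)$, the hypothesis $w(c)=c$ collapses the sum to $\bigl(\sum_{w}f_w\bigr)c = X(1)\cdot c$; since $1\in\Lambda_1$ forces $X(1)\in\Lambda_1\subseteq\Lambda_2$ and $c\in\Lambda_2$, this product lies in $\Lambda_2$. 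Hence $X(b)\in\Lambda_2$.

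\textbf{Step 2: $\mathcal{F}_{\Lambda_2}\cap\mathcal{F}_1\subseteq\mathcal{F}_{\Lambda_1}$.} Let $X=\sum_{w\in\hat{W}_1}f_w w$ lie in the intersection. For $a\in\Lambda_1$, the compatibility condition \eqref{eq:phipsi-condition} ensures $w(a)\in\Lambda_1$, so $X(a)\in L_1$; simultaneously $X(a)\in\Lambda_2$ because $X\in\mathcal{F}_{\Lambda_2}$. The whole containment is therefore reduced to the set-theoretic identity $L_1\cap\Lambda_2=\Lambda_1$ inside $L_2$. I would prove this by taking $z\in L_1\cap\Lambda_2$, writing $z=p/q$ with $p,q\in\Lambda_1$, $q\neq 0$, and decomposing $z=z_1+z_I$ with $z_1\in\Lambda_1$, $z_I\in I$. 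Then $qz_I=p-qz_1\in\Lambda_1\cap I=0$, and since $\Lambda_2$ is an integral domain (built into the flag-order axioms) with $q\neq 0$, we conclude $z_I=0$, i.e., $z=z_1\in\Lambda_1$.

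\textbf{Expected obstacle.} The only non-formal step is the lemma $L_1\cap\Lambda_2=\Lambda_1$; the domain hypothesis on $\Lambda_2$ is precisely what enables the cancellation $qz_I=0\Rightarrow z_I=0$. Everything else is clean bookkeeping, the only clever observation being that in the forward direction $X(1)\in\Lambda_1$ lets us absorb ideal-valued inputs into $\Lambda_2$.
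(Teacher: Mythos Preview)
Your proof is correct and follows essentially the same route as the paper: both argue by double inclusion, both handle the inclusion $\mathcal{F}_{\Lambda_1}\subseteq\mathcal{F}_{\Lambda_2}\cap\mathcal{F}_1$ via the decomposition $\Lambda_2=\Lambda_1\oplus I$ together with the observation $X(c)=X(1)\cdot c$ for $c\in I$, and both reduce the reverse inclusion to the set-theoretic lemma $\Frac(\Lambda_1)\cap\Lambda_2=\Lambda_1$ (equivalently $\Frac(\Lambda_1)\cap I=0$). Your justification of that lemma---clearing the denominator $q$ and invoking $\Lambda_1\cap I=0$ plus the domain property of $\Lambda_2$---is in fact cleaner than the paper's, which asserts somewhat cryptically that a nonzero element of $I\cap(\Frac(\Lambda_1)\setminus\Lambda_1)$ would force $1\in I$; your argument makes the mechanism transparent.
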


Note that it follows that $I$ is finitely generated due to $\Lambda_2$ being Noetherian.

\begin{proof}
The first two assumptions allow for an embedding
\[
\mathcal{F}_1=\Frac(\Lambda_1)\#\hat{W}_1\rightarrow\Frac(\Lambda_2)\#\hat{W}_2=\mathcal{F}_2.
\]
Thus this intersection is reasonable to consider.

\noindent$\subset\colon$ Let $X\in\mathcal{F}_{\Lambda_2}\cap\mathcal{F}_1$.
First, $X(\Lambda_1)\subset\Lambda_2$ as $\Lambda_1\subset\Lambda_2$ and $X\in\mathcal{F}_{\Lambda_2}$. Second, $X(\Lambda_1)\subset\Frac(\Lambda_1)$.
Hence, 
\[
X(\Lambda_1)\subset\Lambda_2\cap\Frac(\Lambda_1)=(\Lambda_1\oplus I)\cap\Frac(\Lambda_1)=\Lambda_1\oplus(I\cap\Frac(\Lambda_1)).
\]
We claim that $\Frac(\Lambda_1)\cap I=0$. This follows as $I\cap\Lambda_1=0$
and if $I\cap(\Frac(\Lambda_1)\setminus\Lambda_1)\neq0$, then $1\in I$
which is a contradiction. Thus $X\in\mathcal{F}_{\Lambda_1}$.

\noindent$\supset\colon$ Let $X\in\mathcal{F}_{\Lambda_1}$. It is obvious
that $X\in\mathcal{F}_1\subset\mathcal{F}_2$. We need to show that
$X(\Lambda_2)\subset\Lambda_2$. Recall that $\Lambda_2=\Lambda_1\oplus I$ and
$X(a+b)=X(a)+X(b)$.
We observe that $\Lambda_2=\Lambda_1[a_1,a_2,\ldots,a_n]$, where the $a_i$ are the generators of $I$. This can be seen via the following diagram where the outer maps are clearly isomorphisms.
\[
\begin{tikzpicture}
\node(FO1)                      {$0$};
\node(I1)   [right=0.5cm of FO1]  {$(a_1,a_2,\ldots,a_n)$};
\node(C1)   [right=1.25cm of I1]   {$\Lambda_2=\Lambda_1\oplus I$};
\node(L1)   [right=1.5cm of C1]   {$\Lambda_1$};
\node(LO1)  [right=0.5cm of L1]   {$0$};

\node(FO2)  [below=1.1cm of FO1]  {$0$};
\node(I2)   [below=1cm of I1]   {$(a_1,a_2,\ldots,a_n)$};
\node(C2)   [below=1cm of C1]   {$\Lambda_1[a_1,a_2,\ldots,a_n]$};
\node(L2)   [below=1cm of L1]   {$\Lambda_1$};
\node(LO2)  [below=1.1cm of LO1]  {$0$};

\foreach \x in {1,2} {
\draw[->,>=stealth,black] (FO\x) -- (I\x);
\draw[->,>=stealth,black] (I\x) -- (C\x);
\draw[->,>=stealth,black] (C\x) -- (L\x);
\draw[->,>=stealth,black] (L\x) -- (LO\x);
}
\foreach \x in {I,C,L} {
\draw[<->,>=stealth,black] (\x1) -- (\x2);
}

\end{tikzpicture} 
\]
This means that every $f\in\Lambda_2$ can be written as
\begin{equation}\label{eq: poly of lambda2}
f=\sum_{0\leq j_1\leq j_2,\leq\cdots,\leq j_n\leq k} f_ja_1^{j_1}a_2^{j_2}\cdots a_n^{j_n}\quad\text{where }f_j\in\Lambda_1
\end{equation}
By the third assumption, for any $w\in\hat{W}_1$
$a_i\in I$, $w(a_i)=a_i$. This means that
\begin{align*}
X(f)&=X\left(\sum_{0\leq j_1\leq j_2,\leq\cdots,\leq j_n\leq k} f_ja_1^{j_1}a_2^{j_2}\cdots a_n^{j_n}\right)\\
&=\sum_{0\leq j_1\leq j_2,\leq\cdots,\leq j_n\leq k} X(f_ja_1^{j_1}a_2^{j_2}\cdots a_n^{j_n})\\
&=\sum_{0\leq j_1\leq j_2,\leq\cdots,\leq j_n\leq k} a_1^{j_1}a_2^{j_2}\cdots a_n^{j_n}X(f_j)\\
\intertext{Now, $X(f_j)=g_j\in\Lambda_1$ due to $X\in\mathcal{F}_{\Lambda_1}$. Therefore,}\\
&=\sum_{0\leq j_1\leq j_2,\leq\cdots,\leq j_n\leq k} a_1^{j_1}a_2^{j_2}\cdots a_n^{j_n}g_j\\
&\in\Lambda_2.
\end{align*}
Hence,
$X\in\mathcal{F}_{\Lambda_2}\cap\mathcal{F}_1$.
\end{proof}

We apply the above to prove a result inspired by differential operators on affine varieties.

\begin{definition}
Given an ideal $I\subset\Lambda$, we define:
\[
\mathcal{F}_\Lambda[I]=\{X\in\mathcal{F}_\Lambda\mid X(I)\subset I\},
\]
the \emph{subring of $\mathcal{F}_\Lambda$ that fixes $I$}.
\end{definition}

\begin{definition}
Given and ideal $I\subset\Lambda$, we define
\[
I\mathcal{F}_\Lambda=\{X\in\mathcal{F}_\Lambda\mid X(\Lambda)\subset I\},
\]
the \emph{subring of $\mathcal{F}_\Lambda$ send $\Lambda$ to $I$. In fact, $I\mathcal{F}_\Lambda$ is an ideal of $\mathcal{F}_\Lambda[I]$.}
\end{definition}

To see that $I\mathcal{F}_\Lambda$ is an ideal, let $X\in\mathcal{F}_\Lambda[I]$ and
$Y\in I\mathcal{F}_\Lambda$. Then for some $a\in I$,
\[
XY(a)=X(Y(a))\in X(I)\subset I,
\]
so $XY\in\mathcal{F}_\Lambda[I]$. Similarly, $YX\in\mathcal{F}_\Lambda[I]$ by
\[
YX(a)=Y(X(a))\in Y(I)\subset I.
\]

\begin{remark}
We observe that $I\subset I\mathcal{F}_\Lambda$ based on the fact that the action of $\Lambda$ on itself is multiplication. 
\end{remark}

\begin{remark}
We warn the reader that $I\mathcal{F}_\Lambda\supsetneq I\cdot\mathcal{F}_\Lambda$ in general.
\end{remark}

\begin{lemma}\label{lem: quotient injects into End}
The map $\mathcal{F}_{\Lambda_2}[I]/I\mathcal{F}_{\Lambda_2}\rightarrow\End(\Lambda_1)$
is injective.
\end{lemma}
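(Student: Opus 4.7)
The plan is to identify the map explicitly, recognize that its kernel is exactly $I\mathcal{F}_{\Lambda_2}$, and then invoke the universal property of the quotient.

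First I would pin down the map. Given $X\in\mathcal{F}_{\Lambda_2}[I]$, the defining property $X(I)\subset I$ means that $X$ descends to an additive endomorphism of $\Lambda_2/I$. Using the hypothesis $\Lambda_2=\Lambda_1\oplus I$ from the setup of Theorem~\ref{thm: standard order intersection}, the quotient $\Lambda_2/I$ is canonically identified with $\Lambda_1$ via the projection $\pi\colon\Lambda_2\to\Lambda_1$ along $I$. Concretely, the map in question sends $X\in\mathcal{F}_{\Lambda_2}[I]$ to $\bar X\in\End(\Lambda_1)$ defined by
\[
\bar X(a)=\pi\bigl(X(a)\bigr),\qquad a\in\Lambda_1\subset\Lambda_2.
\]
This is well defined because $X(a)\in\Lambda_2=\Lambda_1\oplus I$ by the flag order property of $\mathcal{F}_{\Lambda_2}$.

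Next I would compute the kernel of $X\mapsto\bar X$ on $\mathcal{F}_{\Lambda_2}[I]$. By the definition above, $\bar X=0$ is equivalent to $\pi(X(a))=0$ for all $a\in\Lambda_1$, which by the direct sum decomposition is equivalent to $X(\Lambda_1)\subset I$. I claim this is equivalent to $X(\Lambda_2)\subset I$, i.e.\ to $X\in I\mathcal{F}_{\Lambda_2}$. One direction is trivial since $\Lambda_1\subset\Lambda_2$. Conversely, if $X(\Lambda_1)\subset I$ then, using $X(I)\subset I$ from $X\in\mathcal{F}_{\Lambda_2}[I]$ and additivity,
\[
X(\Lambda_2)=X(\Lambda_1+I)\subset X(\Lambda_1)+X(I)\subset I+I=I.
\]
Note also that $I\mathcal{F}_{\Lambda_2}$ is automatically contained in $\mathcal{F}_{\Lambda_2}[I]$, since $X(I)\subset X(\Lambda_2)\subset I$ whenever $X\in I\mathcal{F}_{\Lambda_2}$, so the quotient in the statement makes sense.

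Combining the two paragraphs, the map $\mathcal{F}_{\Lambda_2}[I]\to\End(\Lambda_1)$ has kernel exactly $I\mathcal{F}_{\Lambda_2}$, so it factors through an injection $\mathcal{F}_{\Lambda_2}[I]/I\mathcal{F}_{\Lambda_2}\hookrightarrow\End(\Lambda_1)$. There is no real obstacle here; the only subtlety is tracking that both conditions $X(I)\subset I$ and $X(\Lambda_1)\subset I$ are needed to conclude $X(\Lambda_2)\subset I$, which is why the quotient is taken inside $\mathcal{F}_{\Lambda_2}[I]$ rather than all of $\mathcal{F}_{\Lambda_2}$.
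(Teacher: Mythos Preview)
Your proof is correct and follows essentially the same approach as the paper's: define the map $\mathcal{F}_{\Lambda_2}[I]\to\End(\Lambda_1)$ via the identification $\Lambda_2/I\cong\Lambda_1$, then compute its kernel to be $I\mathcal{F}_{\Lambda_2}$ using $\Lambda_2=\Lambda_1\oplus I$. If anything, you are more explicit than the paper in spelling out why $X(\Lambda_1)\subset I$ together with $X(I)\subset I$ gives $X(\Lambda_2)\subset I$, a step the paper leaves implicit.
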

\begin{proof}
First we observe that $\mathcal{F}_{\Lambda_2}[I]\rightarrow\End(\Lambda_1)$ by
sending $X\mapsto(a+I\mapsto X(a)+I)$. We now claim the kernel of this map is
$K=I\mathcal{F}_{\Lambda_2}$. It is clear that $K\supset I\mathcal{F}_{\Lambda_2}$, and if
$X\in K$ then $X(a+I)=I$, that is $X(a)\in I$ for all $a\in\Lambda_1$. Since
$\Lambda_2=\Lambda_1\oplus I$, it follows that $X\in I\mathcal{F}_{\Lambda_2}$.
Hence the map is injective.
\end{proof}

\begin{theorem}\label{thm: standard order quotient}
Following the same assumptions as in Theorem \ref{thm: standard order intersection}, we have an embedding 
$\eta\colon\mathcal{F}_{\Lambda_1}
\hookrightarrow\mathcal{F}_{\Lambda_2}[I]/I\mathcal{F}_{\Lambda_2}$
\end{theorem}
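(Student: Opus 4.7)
The plan is to define $\eta$ as the composition
\[
\mathcal{F}_{\Lambda_1}\hookrightarrow\mathcal{F}_{\Lambda_2}\twoheadrightarrow\mathcal{F}_{\Lambda_2}[I]/I\mathcal{F}_{\Lambda_2},
\]
where the first arrow is the embedding supplied by Theorem \ref{thm: standard order intersection} and the second is the canonical projection. Making sense of the second arrow requires first checking that the image of $\mathcal{F}_{\Lambda_1}$ already lies inside the subring $\mathcal{F}_{\Lambda_2}[I]$; the proof then splits into (a) that containment and (b) the injectivity of $\eta$.

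For (a), let $X=\sum_{w\in\hat{W}_1}f_w w\in\mathcal{F}_{\Lambda_1}$ and $a\in I$. The third hypothesis inherited from Theorem \ref{thm: standard order intersection} says $w(a)=a$ for every $w\in\hat{W}_1$, so
\[
X(a)=\sum_{w}f_w\,w(a)=\Big(\sum_w f_w\Big)a = X(1)\cdot a,
\]
which lies in $I$ because $X(1)\in\Lambda_1\subset\Lambda_2$ and $I$ is an ideal of $\Lambda_2$.

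For (b), the cleanest route is to post-compose $\eta$ with the injection $\mathcal{F}_{\Lambda_2}[I]/I\mathcal{F}_{\Lambda_2}\hookrightarrow\End(\Lambda_1)$ supplied by Lemma \ref{lem: quotient injects into End}, where $\End(\Lambda_1)$ is identified with $\End(\Lambda_2/I)$ via the decomposition $\Lambda_2=\Lambda_1\oplus I$. Writing any $a\in\Lambda_2$ as $a_1+a_I$ with $a_1\in\Lambda_1$ and $a_I\in I$, and using (a) to see $X(a_I)\in I$, one checks that $\eta(X)$ acts on $\Lambda_2/I\cong\Lambda_1$ simply by $a_1\mapsto X(a_1)$, i.e.\ by the ordinary evaluation action. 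Thus injectivity of $\eta$ reduces to faithfulness of the natural action of $\mathcal{F}_{\Lambda_1}$ on $\Lambda_1$.

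The main obstacle is therefore this faithfulness claim: if $X=\sum_w f_w w$ with $f_w\in L_1=\Frac(\Lambda_1)$ satisfies $X(a)=0$ for all $a\in\Lambda_1$, then every $f_w=0$. Dedekind's linear independence of distinct automorphisms of $L_1$ gives this when the relation holds on all of $L_1$, so the task is to upgrade the vanishing from $\Lambda_1$ to $L_1$. I would induct on the number of nonzero coefficients: for any two distinct $w_i\neq w_j$ in $\hat{W}_1$, faithfulness of the action of $\hat{W}_1$ on $\Lambda_1$ produces $b\in\Lambda_1$ with $w_i(b)\neq w_j(b)$; substituting $ba$ for $a$ in $\sum_w f_w w(a)=0$ and subtracting $w_j(b)$ times the original relation kills the $w_j$-term, reducing to an analogous vanishing relation with one fewer index. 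Iterating with $b$ chosen appropriately for each remaining pair forces each $f_w=0$, completing the argument.
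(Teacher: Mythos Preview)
Your proposal is correct and follows essentially the same route as the paper: embed $\mathcal{F}_{\Lambda_1}$ into $\mathcal{F}_{\Lambda_2}[I]$ (your step (a) reproduces the $\supset$-direction of Theorem~\ref{thm: standard order intersection}, which indeed lands in $\mathcal{F}_{\Lambda_2}[I]$ since $X(a)=X(1)a\in I$), then obtain injectivity by factoring through $\End(\Lambda_1)$ via Lemma~\ref{lem: quotient injects into End}. The paper simply asserts that $\mathcal{F}_{\Lambda_1}\hookrightarrow\End(\Lambda_1)$ is ``known,'' whereas you spell out the Dedekind/Artin independence argument for faithfulness of the evaluation action; that extra detail is fine and the argument is valid.
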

\begin{proof}
In the proof of Theorem \ref{thm: standard order intersection} it was shown that
$\mathcal{F}_{\Lambda_1}\hookrightarrow\mathcal{F}_{\Lambda_2}[I]$, and it is known that
$F_{\Lambda_1}\hookrightarrow\End(\Lambda_1)$. This gives rise to the following diagram:
\[
\begin{tikzpicture}
\node(E)                        {$\End(\Lambda_1)$};
\node(T)    [above=0.75cm of E]  {$\mathcal{F}_{\Lambda_2}[I]$};
\node(Q)    [left=1cm of E]   {$\mathcal{F}_{\Lambda_2}[I]/I\mathcal{F}_{\Lambda_2}$};
\node(S)    [right=1cm of E]  {$\mathcal{F}_{\Lambda_1}$};

\draw[>=stealth, right hook->,black] (Q) -- (E);
\draw[>=stealth, <-left hook,black] (E) -- (S);
\draw[>=stealth, ->,black] (T) -- (E);
\draw[>=stealth, <-left hook,black] (T)--(S);
\draw[>=stealth, ->>,black] (T) -- (Q);
\end{tikzpicture}
\]
The left triangle arises from Lemma \ref{lem: quotient injects into End} and clearly commutes. Now the right triangle commutes because for all $a\in\Lambda_1$, $X(a)=X(a+I)$ by definition. Thus the whole triangle commutes, and
$\mathcal{F}_{\Lambda_1}
\hookrightarrow\mathcal{F}_{\Lambda_2}[I]/I\mathcal{F}_{\Lambda_2}$.
\end{proof}

\begin{example}
We can apply this result to the same set-up as Example \ref{ex: Klein 4->S4} in this setting as $\C[x_1,x_2,x_3,x_4]=\C[x_2-x_1,x_4-x_3]\oplus(x_2+x_1,x_4+x_3)$, our choice of maps in Example \ref{ex: Klein 4->S4} satisfy the other two requirements. Therefore, Theorem \ref{thm: standard order quotient} applies. We describe the components of the RHS for the benefit of the reader:
\[
\mathcal{F}_{\C[x_1,x_2,x_3,x_4]}[(x_2+x_1,x_4+x_3)]
=\C[x_1,x_2,x_3,x_4]\left\langle\frac{1}{x_2-x_1}((12)-\mathbbm{1}),\frac{1}{x_4-x_3}((34)-\mathbbm{1})\right\rangle
\]
and
\[
(x_2+x_1,x_4+x_3)\mathcal{F}_{\C[x_1,x_2,x_3,x_4]}=(x_2+x_1,x_4+x_3)\cdot\mathcal{F}_{\C[x_1,x_2,x_3,x_4]}.
\]
\end{example}

While the map $\eta$ in Theorem \ref{thm: standard order quotient} is surjective in our previous example, this is not generally true.
This is unlike the situation of differential operators on polynomial rings. Even if $\Lambda_2$ is a polynomial ring and $\hat{W}_2$
a complex reflection group. The following example demonstrates this.

\begin{example}\label{example: counter example to quotient isomorphism}
Let $\Lambda_2=\C[x_1,x_2,x_3]$, $\Lambda_1=\C[x_1]$, $I=(x_2,x_3)$, $\hat{W}_2=S_3$
acting by permutation of variables, and $\hat{W}_1$ trivial. In this case
$\mathcal{F}_{\Lambda_1}\subsetneq\mathcal{F}_{\Lambda_2}[I]/I\mathcal{F}_{\Lambda_2}$,
as the permutation $(23)$ is on the right-hand side, but is not in the image of $\eta$ as
$\hat{W}_1$ is trivial.
\end{example}


\section{Tensor Products}\label{sec: tensors}

Let $(\Lambda_i,\mathscr{M}_i,W_i)$ for $i=1,2$ be the data for standard flag orders
$\mathcal{F}_{\Lambda_i}\subset\mathcal{F}_i=\Frac(\Lambda_i)\#\hat{W}_i$, where
$\hat{W}_i=W_i\ltimes\mathscr{M}_i$. Let $\Lambda=\Lambda_1\otimes\Lambda_2$,
$\mathscr{M}=\mathscr{M}_1\times\mathscr{M}_2$,
$W=W_1\times W_2$, and $\mathcal{F}=\Frac(\Lambda)\#\hat{W}$, where
$\hat{W}=W\ltimes\mathscr{M}=\hat{W}_1\times\hat{W}_2$.

The following is a generalization of Lemma 2.17 (ii) from \cite{HARTWIG2020106806}.

\begin{lemma}\label{lem: nonzero determinant}
Given a collection of elements $\{X_i\}_{i=1}^n\in\mathcal{F}$ that
are linearly independent over $\Frac(\Lambda)$, then there exists $\{a_i\}_{i=1}^n\in\Lambda$ such that
\[
\det\bigg(\big(X_i(a_j)\big)_{i,j=1}^n\bigg)\neq0
\]
\end{lemma}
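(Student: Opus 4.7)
The plan is to mirror Hartwig's proof of Lemma 2.17(ii), adapted from $L\#\mathscr{M}$ to $\mathcal{F}=\Frac(\Lambda)\#\hat{W}$, by induction on $n$. The essential structural fact is that evaluation is left $\Frac(\Lambda)$-linear: writing $X=\sum_{w\in\hat{W}}f_w w$, one has $X(a)=\sum_w f_w\, w(a)$, so $(\alpha X)(a)=\alpha\, X(a)$ for every $\alpha\in\Frac(\Lambda)$. This is what lets us promote a cofactor expansion in $\mathcal{F}$ to an identity among evaluations.

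For the base case $n=1$, I would show that if $X_1=\sum_w f_w w\neq 0$ then $X_1(a)\neq 0$ for some $a\in\Lambda$. Choose $d\in\Lambda\setminus\{0\}$ with $df_w\in\Lambda$ for every $w$ in the (finite) support of $X_1$. If $X_1(a)=0$ for all $a\in\Lambda$, then $\sum_w(df_w)w(a)=d\cdot X_1(a)=0$ for every $a\in\Lambda$. Since $\hat{W}$ acts faithfully on $\Lambda$, its elements restrict to pairwise distinct ring homomorphisms $\Lambda\to\Frac(\Lambda)$, and Dedekind's lemma on linear independence of characters forces $df_w=0$ for every $w$, contradicting $X_1\neq 0$.

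For the inductive step, apply the hypothesis to $X_1,\ldots,X_{n-1}$ to obtain $a_1,\ldots,a_{n-1}\in\Lambda$ with $D_n:=\det\big((X_i(a_j))_{i,j=1}^{n-1}\big)\neq 0$. For each $1\leq i\leq n$ let $D_i\in\Frac(\Lambda)$ denote the determinant of the $(n-1)\times(n-1)$ matrix obtained from $(X_k(a_j))_{k=1,\ldots,n;\,j=1,\ldots,n-1}$ by deleting row $i$, and set
\[
Y:=\sum_{i=1}^n (-1)^{i+n} D_i X_i\in\mathcal{F}.
\]
Cofactor expansion along the last column, combined with the left $\Frac(\Lambda)$-linearity of evaluation, gives $Y(a_n)=\det\big((X_i(a_j))_{i,j=1}^n\big)$ for every choice of $a_n\in\Lambda$. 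The coefficient of $X_n$ in $Y$ is $(-1)^{2n}D_n=D_n\neq 0$, and the $X_i$ are linearly independent over $\Frac(\Lambda)$, so $Y\neq 0$; the base case then supplies $a_n\in\Lambda$ with $Y(a_n)\neq 0$, as required.

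The main obstacle is the base case, which rests entirely on Dedekind's independence applied to the restrictions of $\hat{W}$ to $\Lambda$. One must verify that distinct elements of $\hat{W}$ remain distinct as ring homomorphisms $\Lambda\to\Frac(\Lambda)$, and this is exactly where the standing assumption that $\hat{W}$ acts faithfully on $\Lambda$ is indispensable. Once the base case is secured, the rest of the argument is routine cofactor bookkeeping, and the tensor-product structure $\Lambda=\Lambda_1\otimes\Lambda_2$, $\hat{W}=\hat{W}_1\times\hat{W}_2$ plays no special role beyond providing the ambient algebra.
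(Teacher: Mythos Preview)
Your proposal is correct and takes essentially the same approach as the paper: the paper's proof consists of the single sentence ``Identical to the proof of Lemma 2.17 (ii) in \cite{HARTWIG2020106806},'' and you have faithfully spelled out that argument, adapting it from $L\#\mathscr{M}$ to $\Frac(\Lambda)\#\hat{W}$ via Dedekind independence for the base case and cofactor expansion plus left $\Frac(\Lambda)$-linearity of evaluation for the induction.
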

\begin{proof}
Identical to the proof of Lemma 2.17 (ii) in \cite{HARTWIG2020106806} by using $\sigma_i=X_i$, $A=\Lambda$, and $F=\Frac(\Lambda)$.
\end{proof}

\begin{lemma}\label{lem: aj are simple tensors}
When applying Lemma 2.17 (ii) from \cite{HARTWIG2020106806} to $A=\Lambda$ and
$F=\Frac(\Lambda)$, and $\sigma_1,\ldots,\sigma_n\in W\ltimes\mathscr{M}$ the choices of
$(a_1,a_2,\ldots,a_n)\in\Lambda^n$ can be selected such that $a_j$ is a simple tensor
for each $j=1,2,\ldots,n$.
\end{lemma}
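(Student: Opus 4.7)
The plan is to start from the conclusion of the cited Lemma 2.17 (ii) applied directly, which already hands us some tuple $(a_1, \ldots, a_n) \in \Lambda^n$ with $\det\bigl((\sigma_i(a_j))_{i,j=1}^n\bigr) \neq 0$, and then massage each $a_j$ individually into a simple tensor by exploiting the multilinearity of the determinant together with the product structure of the action of $\hat{W} = \hat{W}_1 \times \hat{W}_2$ on $\Lambda = \Lambda_1 \otimes \Lambda_2$.

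First, I would write each $a_j \in \Lambda_1 \otimes \Lambda_2$ as a finite sum of simple tensors, say $a_j = \sum_{k=1}^{N_j} b_{j,k} \otimes c_{j,k}$ with $b_{j,k} \in \Lambda_1$ and $c_{j,k} \in \Lambda_2$. Since every $\sigma_i \in W \ltimes \mathscr{M}$ has the form $\sigma_i = (\sigma_i^{(1)}, \sigma_i^{(2)})$ acting diagonally, i.e. $\sigma_i(b \otimes c) = \sigma_i^{(1)}(b) \otimes \sigma_i^{(2)}(c)$, each $\sigma_i$ is $\K$-linear on $\Lambda$, and in particular $\sigma_i(a_j) = \sum_k \sigma_i^{(1)}(b_{j,k}) \otimes \sigma_i^{(2)}(c_{j,k})$. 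Because the determinant is multilinear in the columns of the matrix, expanding each column $j$ along the decomposition of $a_j$ yields
\[
\det\bigl((\sigma_i(a_j))_{i,j=1}^n\bigr) = \sum_{(k_1,\ldots,k_n)} \det\bigl((\sigma_i(b_{j,k_j}\otimes c_{j,k_j}))_{i,j=1}^n\bigr),
\]
where the sum ranges over tuples $(k_1,\ldots,k_n)$ with $1 \leq k_j \leq N_j$.

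Since the left-hand side is nonzero, at least one summand on the right-hand side must be nonzero. Fixing such a tuple $(k_1,\ldots,k_n)$ and setting $a_j' := b_{j,k_j} \otimes c_{j,k_j}$ for each $j$ produces a tuple $(a_1',\ldots,a_n') \in \Lambda^n$ of simple tensors with $\det\bigl((\sigma_i(a_j'))_{i,j=1}^n\bigr) \neq 0$, which is exactly what is claimed. No step looks like a genuine obstacle: the only thing one must verify carefully is that the action of $\hat{W}$ on $\Lambda$ respects the tensor decomposition, which is built into the hypotheses at the top of the section and ensures that the column-multilinearity expansion is legitimate.
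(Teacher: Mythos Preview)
Your proof is correct and takes a genuinely different route from the paper's. The paper argues by induction on $n$: it assumes simple tensors $a_1,\ldots,a_{n-1}$ have already been found making the $(n-1)\times(n-1)$ minor nonsingular, then invokes part (i) of Lemma~2.17 of \cite{HARTWIG2020106806} to produce an $a_n$ on which the ``residual'' operator $\sigma_n - \sum_i x_i\sigma_i$ is nonzero, and finally observes that this $a_n$ can be taken simple because a $\K$-linear operator vanishing on every simple tensor vanishes on all of $\Lambda$. Your argument instead starts from the full $n\times n$ conclusion of Lemma~2.17~(ii), expands each column along a simple-tensor decomposition of $a_j$, and uses column multilinearity of the determinant to extract a single nonzero summand. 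Your approach is shorter and more self-contained, since it avoids both the induction and the appeal to part~(i) of the cited lemma; the paper's approach, on the other hand, makes more transparent \emph{why} simple tensors suffice (any $\K$-linear functional detecting a general element already detects some simple tensor), and its inductive structure mirrors the original proof of Lemma~2.17 more closely.
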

\begin{proof}
We use induction on $n$. For $n=1$, since $\sigma_1\in\hat{W}$ acts as an automorphism
of $\Lambda$, it is nonzero on the simple tensor $1\otimes1$. For $n>1$, we assume
we have simple tensors $(a_1,a_2,\ldots,a_{n-1})\in\Lambda^{n-1}$ such that $(\sigma_j(a_i))_{i,j=1}^{n-1}$ has nonzero
determinant. We now observe by part (i) of Lemma 2.17 from \cite{HARTWIG2020106806} that there exists an $a_n\in\Lambda$
such that
\[
(\sigma_n-\sum_{i=1}^{n-1}x_i\sigma_i)(a_n)\neq0.
\]
We claim that we can choose $a_n$ to be a simple tensor. If for the sake of argument we
assume that $\sigma_n-\sum_{i=1}^{n-1}x_i\sigma_i$ is zero on every simple tensor,
then if $a_n=\sum_{j=1}^k a_j^{(1)}\otimes a_j^{(2)}$ is a sum of simple tensors, where
$a_j^{(i)}\in\Lambda_i$,
\[
0\neq(\sigma_n-\sum_{i=1}^{n-1}x_i\sigma_i)(a_n)=\sum_{j=1}^k(\sigma_n-\sum_{i=1}^{n-1}x_i\sigma_i)(a_j^{(1)}\otimes a_j^{(2)})=0.
\]
Which is a contradiction.
\end{proof}

\begin{notation}
Below, if $A$ is an algebra action on a vector space $V$, and $W\subset V$ is a subspace, then we put
$A_W=\{a\in A\mid aW\subset W\}$.
\end{notation}

Recall that the standard Galois order $\mathscr{K}_\Gamma$ can be regarded as a spherical subalgebra of $\mathcal{F}_\Lambda$, as
$\mathscr{K}_\Gamma\cong e\mathcal{F}_\Lambda e$, where
$e=\frac{1}{\#W}\sum_{w\in W}w$ \cite{Webster19}.

\begin{theorem}\label{thm: tensor of std orders}
\text{}
\begin{enumerate}[\rm (a)]
\item There is a chain of embeddings
\[
\mathcal{F}_{\Lambda_1}\otimes\mathcal{F}_{\Lambda_2}
\hookrightarrow\mathcal{F}_\Lambda\hookrightarrow
(\mathcal{F}_1\otimes\mathcal{F}_2)_\Lambda.
\]
\item There is a chain of embeddings
\[
\mathscr{K}_{\Gamma_1}\otimes\mathscr{K}_{\Gamma_2}
\hookrightarrow\mathscr{K}_\Gamma\hookrightarrow
(\mathscr{K}_1\otimes\mathscr{K}_2)_\Gamma.
\]
\end{enumerate}
\end{theorem}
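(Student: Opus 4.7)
The plan splits by parts. For (a), I first construct a natural ring homomorphism
\[
\Phi: \mathcal{F}_1 \otimes_{\mathbb{C}} \mathcal{F}_2 \longrightarrow \mathcal{F},\qquad (f_1 w_1)\otimes(f_2 w_2)\mapsto (f_1\otimes f_2)(w_1,w_2),
\]
using the inclusion $L_1\otimes L_2\hookrightarrow L=\Frac(\Lambda_1\otimes\Lambda_2)$ (valid because the tensor product of integral domains is a domain, with $L$ as its field of fractions) together with the identification $\hat{W}_1\times\hat{W}_2=\hat{W}$. That $\Phi$ respects the skew relations follows because $\hat{W}_1$ commutes with $\hat{W}_2$ in $\hat{W}$ and each $\hat{W}_i$ acts trivially on $L_j$ for $i\neq j$; injectivity comes from the $L$-linear independence of distinct elements of $\hat{W}$. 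For the first embedding I check on simple tensors: if $X_i\in\mathcal{F}_{\Lambda_i}$, then $\Phi(X_1\otimes X_2)(a_1\otimes a_2)=X_1(a_1)\otimes X_2(a_2)\in\Lambda_1\otimes\Lambda_2=\Lambda$, and $\mathbb{C}$-linearity handles general elements of $\Lambda$.

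For the second embedding the goal is to show that any $X=\sum_\sigma f_\sigma\sigma\in\mathcal{F}_\Lambda$ has each coefficient in the (possibly smaller) subring $L_1\otimes L_2\subset L$. I would let $S$ and $T$ be the projections of $\supp(X)$ onto $\hat{W}_1$ and $\hat{W}_2$, and extend the support to $S\times T$ by setting the extra coefficients to zero. Applying Lemma~\ref{lem: nonzero determinant} independently within each factor yields $b_u\in\Lambda_1$ ($u\in S$) with $\det(s(b_u))_{s,u\in S}\neq 0$ and $c_v\in\Lambda_2$ ($v\in T$) with $\det(t(c_v))_{t,v\in T}\neq 0$. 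Evaluating $X$ on the simple tensors $a_{(u,v)}=b_u\otimes c_v$ produces the system
\[
X(a_{(u,v)})=\sum_{(s,t)\in S\times T} f_{(s,t)}\bigl(s(b_u)\otimes t(c_v)\bigr),
\]
whose coefficient matrix is the Kronecker product $M_1\otimes M_2$, with $M_1=(s(b_u))_{s,u}$ and $M_2=(t(c_v))_{t,v}$. The decisive point is that $(M_1\otimes M_2)^{-1}=M_1^{-1}\otimes M_2^{-1}$ has entries in $L_1\otimes L_2$, so solving for $f_{(s,t)}$ against the right-hand sides $X(a_{(u,v)})\in\Lambda\subset L_1\otimes L_2$ forces each $f_{(s,t)}\in L_1\otimes L_2$. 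Hence $X\in\mathcal{F}_1\otimes\mathcal{F}_2$, and since $X(\Lambda)\subset\Lambda$, in fact $X\in(\mathcal{F}_1\otimes\mathcal{F}_2)_\Lambda$.

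For part (b) I exploit the spherical subalgebra descriptions $\mathcal{K}_{\Gamma_i}\cong e_i\mathcal{F}_{\Lambda_i}e_i$ and $\mathcal{K}_\Gamma\cong e\mathcal{F}_\Lambda e$ from \cite{Webster19}, combined with the observation that $W=W_1\times W_2$ makes the full symmetrizer $e=\frac{1}{|W|}\sum_{w\in W}w$ factor as $e=e_1\otimes e_2$ under $\mathcal{F}_1\otimes\mathcal{F}_2\hookrightarrow\mathcal{F}$. Sandwiching the chain of (a) between $e$'s on both sides yields
\[
\mathcal{K}_{\Gamma_1}\otimes\mathcal{K}_{\Gamma_2}\cong (e_1\otimes e_2)(\mathcal{F}_{\Lambda_1}\otimes\mathcal{F}_{\Lambda_2})(e_1\otimes e_2)\hookrightarrow e\mathcal{F}_\Lambda e\cong\mathcal{K}_\Gamma\hookrightarrow e(\mathcal{F}_1\otimes\mathcal{F}_2)_\Lambda e\subset e(\mathcal{F}_1\otimes\mathcal{F}_2)e\cong \mathcal{K}_1\otimes\mathcal{K}_2,
\]
and the final image lies in $(\mathcal{K}_1\otimes\mathcal{K}_2)_\Gamma$ because for $Y\in\mathcal{F}_\Lambda$ and $\gamma\in\Gamma$ we have $eYe(\gamma)=e(Y(\gamma))\in e(\Lambda)=\Gamma$.

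The main obstacle is clearly the second embedding of (a): upgrading the a priori rationality $f_\sigma\in L$ to $f_\sigma\in L_1\otimes L_2$. A single generic choice of simple tensors (as in Lemma~\ref{lem: aj are simple tensors}) produces a determinant in $\Lambda$ that is not separated between the two factors, so Cramer's rule alone would only yield $f_\sigma\in\Frac(\Lambda_1\otimes\Lambda_2)=L$. The essential trick is to force the coefficient matrix itself to have Kronecker product form by choosing the test elements of the product shape $b_u\otimes c_v$ with the two lists separately controlled, so that the inverse then lives in $L_1\otimes L_2$.
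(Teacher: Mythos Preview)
Your proposal is correct and follows essentially the same route as the paper: the first embedding is the obvious restriction of $\mathcal{F}_1\otimes\mathcal{F}_2\hookrightarrow\mathcal{F}$, the second embedding is obtained by evaluating on products $b_u\otimes c_v$ so that the coefficient matrix has Kronecker product form $M_1\otimes M_2$ (the paper writes this as $A=A_1\otimes A_2$ with determinant $d_1^m\otimes d_2^n$), and part (b) is deduced by sandwiching with the factored idempotent $e=e_1\otimes e_2$. One small caveat: your parenthetical ``the tensor product of integral domains is a domain'' is false in general (e.g.\ $\mathbb{C}\otimes_{\mathbb{R}}\mathbb{C}$), though it holds over an algebraically closed base field and in any case the paper already builds the assumption that $\Lambda_1\otimes\Lambda_2$ is a domain into its setup.
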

\begin{proof}
(a) First we observe the following is an embedding of algebras:
\[
\psi\colon\mathcal{F}_1\otimes\mathcal{F}_2\hookrightarrow\mathcal{F}
\]
by $X_1(w_1,\mu_1)\otimes X_2(w_2,\mu_2)\mapsto X_1((w_1,\mu_1),(1,1))X_2((1,1),(w_2,\mu_2))$ and extending linearly.
If we restrict this embedding to $\mathcal{F}_{\Lambda_1}\otimes\mathcal{F}_{\Lambda_2}$,
this gives us an embedding
\[
\widetilde{\psi}\colon\mathcal{F}_{\Lambda_1}\otimes\mathcal{F}_{\Lambda_2}\hookrightarrow\mathcal{F}_\Lambda.
\]
To see this, we just need to show that $\psi(X_1\otimes X_2)\big(\Lambda\big)\subset\Lambda$ for
$X_1\otimes X_2\in\mathcal{F}_{\Lambda_1}\otimes\mathcal{F}_{\Lambda_2}$. However, this
holds since
$X_1\otimes X_2(\lambda_1\otimes\lambda_2)=X_1(\lambda_1)\otimes X_2(\lambda_2)\in\Lambda_1\otimes\Lambda_2$ for all
$\lambda_1\in\Lambda_1,\lambda_2\in\Lambda_2$.

Next, we show the second embedding. We first observe what happens when applying Lemma 2.17 from \cite{HARTWIG2020106806}
to a $X\in \mathcal{F}_\Lambda$. We observe that $X=\sum_{i=1}^kf_i(w_{i1},w_{i2})$ where
$f_i\in\Frac(\Lambda)$ and $(w_{i1},w_{i2})\in\hat{W}$. Let $n=\vert\{w\in\hat{W}_1\mid\exists w^\prime\in\hat{W}_2\colon (w,w^\prime)\in\supp_{\hat{W}}{X}\}\vert$ and
$m=\vert\{w^\prime\in\hat{W}_2\mid\exists w\in\hat{W}_1\colon(w,w^\prime)\in\supp_{\hat{W}}{X}\}\vert$. WLOG we can assume
that $k=n\cdot m$. Let $\{a_{j1}\}$ (resp. $\{a_{j2}\})$ be the set of elements of
$\Lambda_1$ (resp. $\Lambda_2$) such that the matrix $A_1:=(w_{i1}(a_{j1}))_{i,j=1}^n$
(resp. $A_2:=(w_{i2}(a_{j2}))_{i,j=1}^m$) has non-zero determinant denoted $d_1$
(resp. $d_2$). Then the matrix $A=\big((w_{i1},w_{i2})(a_{j1}\otimes a_{j2})\big)$ has
non-zero determinant; moreover, it is clear that $A=A_1\otimes A_2$ so the determinant
is $d=d_1^m\otimes d_2^n$. As such, if $A^\prime$ is the adjugate of $A$ ($A^\prime\cdot A=d\cdot I_{k}$), then it follows using $A^\prime$ that for each $i=1,\ldots,k$:
\[
f_i\in\frac{1}{d}\Lambda=\frac{1}{d_1^m}\Lambda_1\otimes\frac{1}{d_2^n}\Lambda_2.
\]
This shows us that $X\in\psi(\mathcal{F}_1\otimes\mathcal{F}_2)$; moreover, $X\in \psi\big((\mathcal{F}_1\otimes\mathcal{F}_2)_{\Lambda}\big)$. This leads to the second embedding.\\

(b) The symmetrizing idempotent in the group algebra of $W$ can be factored as $e=e_1e_2=e_2e_1$, where $e_i=\frac{1}{\#W_i}\sum_{w\in W_i} w$ for $i=1,2$. Thus $e\mapsto e_1\otimes e_2$. Therefore, by part (a) and using Webster's observation \cite{Webster19} that $e\mathcal{F}_\Lambda e\cong \mathcal{K}_\Gamma$ and
$e_i\mathcal{F}_{\Lambda_i} e_i\cong \mathcal{K}_{\Gamma_i}$ for $i=1,2$, 
this proves the claim.
\end{proof}

\begin{remark}
In all examples we know of, the map $\widetilde{\psi}
\colon\mathcal{F}_{\Lambda_1}\otimes\mathcal{F}_{\Lambda_2}\hookrightarrow\mathcal{F}_\Lambda$ is surjective, making
$\widetilde{\psi}$ an isomorphism.
\end{remark}

\begin{example}
Let $\Lambda=\C[x_1,x_2,\ldots,x_n]$, $W$ trivial, and $\mathscr{M}\cong\Z^n$, then $\mathcal{F}_\Lambda=\Lambda\#\Z^n\cong A_n(\C)$ the $n$-th Weyl algebra. As is well-known $A_n(\C)\otimes A_m(\C)\cong A_{n+m}(\C)$.
\end{example}

\begin{example}
Let $\mathscr{M}$ be trivial. Then $\hat{W}=W$ is finite and $L\#W\cong\End_{\Lambda^W}(L)=\End_{\Lambda^W}(L)$ \cite{HersteinBook}, hence $(L\# W)_\Lambda=\End_{\Lambda^W}(\Lambda)=\End_\Gamma(\Lambda)$.
As such, if $\mathscr{M}_1,\mathscr{M}_2$ trivial, then
\begin{align*}
\mathcal{F}_{\Lambda_1}\otimes\mathcal{F}_{\Lambda_2}
\cong\End_{\Gamma_1}(\Lambda_1)\otimes\End_{\Gamma_2}(\Lambda_2)&\cong\End_{\Gamma_1\otimes\Gamma_2}(\Lambda_1\otimes\Lambda_2)\cong\mathcal{F}_\Lambda,\\
\intertext{via}
\Phi\vert_{\Lambda_1\otimes1}\otimes\Phi\vert_{1\otimes\Lambda_2}&\mapsfrom\Phi\\
\Psi_1\otimes\Psi_2&\mapsto\big((a_1\otimes a_2)\mapsto\Psi_1(a_1)\otimes\Psi_2(a_2)\big)
\end{align*}
\end{example}

Theorem \ref{thm: tensor of std orders} gives us the two very useful corollaries.

\begin{corollary}\label{cor: principal flag orders closed under tensor}
Principal flag orders are closed under tensor products. That is,
given two principal flag orders $F_1$ and $F_2$ with data $(\Lambda_1,W_1,\mathcal{M}_1)$ and
$(\Lambda_2,W_2,\mathcal{M}_2)$ respectively, with each $\Lambda_i$ a $\K$-algebra for a ground field $\K$,
$F_1\otimes_{\K}F_2$ is a principal flag order with data
$(\Lambda_1\otimes\Lambda_2,W_1\times W_2,\mathcal{M}_1\times\mathcal{M}_2)$.
\end{corollary}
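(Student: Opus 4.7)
The plan is to realize $F_1\otimes_{\K}F_2$ as a subalgebra of $\mathcal{F}_\Lambda$ via the embedding $\widetilde\psi$ from Theorem \ref{thm: tensor of std orders}(a), and then to verify directly the three defining conditions of Definition \ref{def: principal flag order} for the product data $(\Lambda,W,\mathscr{M})=(\Lambda_1\otimes\Lambda_2,W_1\times W_2,\mathscr{M}_1\times\mathscr{M}_2)$.

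For the embedding itself: each $F_i$ is by hypothesis a principal flag order, hence contained in the corresponding standard flag order $\mathcal{F}_{\Lambda_i}$; tensoring gives $F_1\otimes F_2\subseteq\mathcal{F}_{\Lambda_1}\otimes\mathcal{F}_{\Lambda_2}$, and Theorem \ref{thm: tensor of std orders}(a) carries this further into $\mathcal{F}_\Lambda$. Condition (iii) of Definition \ref{def: principal flag order} is then immediate, since every element of $\mathcal{F}_\Lambda$ preserves $\Lambda$ by the very definition of the standard flag order. For condition (i), I would use the assumption $\Lambda_i\#W_i\subseteq F_i$ on each factor to conclude $(\Lambda_1\#W_1)\otimes(\Lambda_2\#W_2)\subseteq F_1\otimes F_2$ and identify the left side with $\Lambda\#W$ under $\widetilde\psi$. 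For condition (ii), I would tensor the principality equalities $\Frac(\Lambda_i)F_i=\Frac(\Lambda_i)\#\hat{W}_i$ to obtain
\[
\bigl(\Frac(\Lambda_1)\otimes\Frac(\Lambda_2)\bigr)(F_1\otimes F_2)=\bigl(\Frac(\Lambda_1)\otimes\Frac(\Lambda_2)\bigr)\#\hat{W},
\]
and then left-multiply by $\Frac(\Lambda)$ to conclude $\Frac(\Lambda)(F_1\otimes F_2)=\Frac(\Lambda)\#\hat{W}$, which is precisely condition (ii).

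The main obstacle will be bookkeeping: checking that the canonical identification $(\Lambda_1\#W_1)\otimes(\Lambda_2\#W_2)\cong(\Lambda_1\otimes\Lambda_2)\#(W_1\times W_2)$ of skew group rings is compatible with $\widetilde\psi$, and the analogous identification over the fraction fields. The one genuinely subtle ingredient is that $\Frac(\Lambda_1)\otimes_{\K}\Frac(\Lambda_2)$ must embed into $\Frac(\Lambda_1\otimes_{\K}\Lambda_2)$, which requires $\Lambda_1\otimes_{\K}\Lambda_2$ to be an integral domain. This is already implicit in Theorem \ref{thm: tensor of std orders} (which writes $\Frac(\Lambda)$ for $\Lambda=\Lambda_1\otimes\Lambda_2$), so it is safe to invoke here as a standing hypothesis on the data.
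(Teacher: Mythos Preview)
Your proposal is correct and follows essentially the same approach as the paper's own proof: both use Theorem \ref{thm: tensor of std orders}(a) to obtain condition (iii), the identification $(\Lambda_1\#W_1)\otimes(\Lambda_2\#W_2)\cong\Lambda\#W$ for condition (i), and for condition (ii) tensor the equalities $\Frac(\Lambda_i)F_i=\Frac(\Lambda_i)\#\hat W_i$ and then pass from $\Frac(\Lambda_1)\otimes\Frac(\Lambda_2)$ to $\Frac(\Lambda_1\otimes\Lambda_2)$ via the identity $\Frac(\Lambda_1\otimes\Lambda_2)=\Frac\bigl(\Frac(\Lambda_1)\otimes\Frac(\Lambda_2)\bigr)$. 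Your explicit flagging of the domain hypothesis on $\Lambda_1\otimes_\K\Lambda_2$ is a point the paper leaves implicit.
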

\begin{proof}
We need to show that $F_1\otimes F_2$ satisfies the 3 conditions from Definition
\ref{def: principal flag order}. It is clear that $F_i\subset\mathcal{F}_{\Lambda_i}$
for each $i$ from the definition of the standard flag order. As such,
$F_1\otimes F_2\subset \mathcal{F}_{\Lambda_1\otimes\Lambda_2}$ via the embedding from
Theorem \ref{thm: tensor of std orders} which proves the third condition. Also,
$\Lambda_i\# W_i\subset F_i$ for each $i$, so we satisfy the first condition that
$(\Lambda_1\otimes\Lambda_2)\#(W_1\times W_2)\cong(\Lambda_1\#W_1)\otimes(\Lambda_2\#W_2)\subset F_1\otimes F_2$.

All that remains to prove is the second condition. To do this we observe the following:
\[
\Frac(\Lambda_1\otimes\Lambda_2)=\Frac(\Frac(\Lambda_1)\otimes\Frac(\Lambda_2))
\]
which implies that
\[(\Lambda_1\otimes\Lambda_2)^{-1}(\Frac(\Lambda_1)\otimes\Frac(\Lambda_2))
=\Frac(\Lambda_1\otimes\Lambda_2)
\]
Therefore, we see that
\begin{align}
(\Lambda_1\otimes\Lambda_2)^{-1}(F_1\otimes F_2)&=(\Lambda_1\otimes\Lambda_2)^{-1}((\Frac(\Lambda_1)\#(W_1\ltimes\mathscr{M}_1))\otimes(\Frac(\Lambda_2)\#(W_2\ltimes\mathscr{M}_2)))\label{eq: localize from principal condition}\\
&=(\Lambda_1\otimes\Lambda_2)^{-1}((\Frac(\Lambda_1)\otimes\Frac(\Lambda_2))\#((W_1\times W_2)\ltimes(\mathscr{M}_1\times\mathscr{M}_2))\nonumber\\
&=\Frac(\Lambda_1\otimes\Lambda_2)\#((W_1\times W_2)\ltimes(\mathscr{M}_1\times\mathscr{M}_2))\label{eq: field of fracs the same}
\end{align}
with \ref{eq: localize from principal condition} follows from each $F_i$ being a principal flag order and \ref{eq: field of fracs the same} follows from the above fact. Thus proving the second condition and that $F_1\otimes F_2$ is a principal flag order.
\end{proof}

\begin{corollary}\label{cor: principal galois orders closed under tensor}
Principal Galois orders are closed under tensor products. More explicitly, given two
principal Galois $\Gamma_1$-order $\mathscr{U}_1$ and a principal Galois $\Gamma_2$-order $\mathscr{U}_2$, with each $\Lambda_i$
a $\K$-algebra for a ground field $\K$, $\mathscr{U}_1\otimes_{\K}\mathscr{U}_2$ is a Galois $\Gamma_1\otimes\Gamma_2$-order.
\end{corollary}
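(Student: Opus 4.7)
The plan is to deduce the corollary from part (b) of Theorem \ref{thm: tensor of std orders} combined with Lemma \ref{lem: Order Containment Implication}, by upgrading the field-of-fractions argument already used in the proof of Corollary \ref{cor: principal flag orders closed under tensor} from the flag setting to the Galois setting.

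First I would use principality of each factor, which gives $\mathscr{U}_i \subseteq \mathcal{K}_{\Gamma_i}$, to obtain the inclusion
\[
\mathscr{U}_1 \otimes_{\K} \mathscr{U}_2 \hookrightarrow \mathcal{K}_{\Gamma_1} \otimes_{\K} \mathcal{K}_{\Gamma_2}.
\]
Composing with the embedding $\mathcal{K}_{\Gamma_1} \otimes \mathcal{K}_{\Gamma_2} \hookrightarrow \mathcal{K}_\Gamma$ from Theorem \ref{thm: tensor of std orders}(b), where $\mathcal{K}_\Gamma$ is the standard Galois order for the combined data $(\Lambda_1\otimes\Lambda_2, W_1\times W_2, \mathscr{M}_1\times\mathscr{M}_2)$ already set up at the beginning of Section \ref{sec: tensors}, this places $\mathscr{U}_1 \otimes \mathscr{U}_2$ inside $\mathcal{K}_\Gamma$. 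That containment will immediately yield principality once the Galois-ring property is established.

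Next I would verify that $\mathscr{U}_1 \otimes \mathscr{U}_2$ is a Galois $(\Gamma_1\otimes\Gamma_2)$-ring, i.e.\ that $K(\mathscr{U}_1 \otimes \mathscr{U}_2) = (\mathscr{U}_1 \otimes \mathscr{U}_2)K = \mathcal{K}$. The computation is the direct analogue of the one in Corollary \ref{cor: principal flag orders closed under tensor}: writing $K = \Frac(\Gamma_1\otimes\Gamma_2) = (\Gamma_1\otimes\Gamma_2)^{-1}(K_1\otimes K_2)$ and invoking $K_i\mathscr{U}_i = \mathcal{K}_i$ for each $i$, one obtains
\[
K(\mathscr{U}_1 \otimes \mathscr{U}_2) = (\Gamma_1\otimes\Gamma_2)^{-1}(K_1\mathscr{U}_1 \otimes K_2\mathscr{U}_2) = (\Gamma_1\otimes\Gamma_2)^{-1}(\mathcal{K}_1 \otimes \mathcal{K}_2) = \mathcal{K},
\]
with the right-sided identity following by symmetry.

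Finally, Lemma \ref{lem: Order Containment Implication} applied to the inclusion $\mathscr{U}_1 \otimes \mathscr{U}_2 \subseteq \mathcal{K}_\Gamma$ of Galois $(\Gamma_1\otimes\Gamma_2)$-rings promotes $\mathscr{U}_1 \otimes \mathscr{U}_2$ to a Galois order, since $\mathcal{K}_\Gamma$ is (principal and hence) itself a Galois order. Combined with the containment in $\mathcal{K}_\Gamma$, this also certifies principality. The most delicate checkpoint is the intermediate step: one must confirm that tensoring over $\K$ interacts correctly with $\Frac(\Gamma_1\otimes\Gamma_2)$ versus $K_1\otimes K_2$ to justify the localization identity above, which is precisely where the hypothesis that each $\Lambda_i$ is a $\K$-algebra over a common ground field enters.
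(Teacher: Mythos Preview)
Your approach is essentially the paper's: reduce to the flag-order argument of Corollary~\ref{cor: principal flag orders closed under tensor} using the embedding from Theorem~\ref{thm: tensor of std orders}(b), then verify the Galois-ring condition via the same localization manipulation. Two small differences are worth noting. First, the paper's proof singles out as its one new ingredient the identity
\[
(\Frac(\Lambda_1)\#\mathscr{M}_1)^{W_1}\otimes(\Frac(\Lambda_2)\#\mathscr{M}_2)^{W_2}
=\big((\Frac(\Lambda_1)\otimes\Frac(\Lambda_2))\#(\mathscr{M}_1\times\mathscr{M}_2)\big)^{W_1\times W_2},
\]
i.e.\ that $W$-invariants behave well under the tensor decomposition; this is what actually justifies your final equality $(\Gamma_1\otimes\Gamma_2)^{-1}(\mathcal{K}_1\otimes\mathcal{K}_2)=\mathcal{K}$, and it is a separate point from the fraction-field compatibility you flagged as the delicate checkpoint. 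Second, you close by invoking Lemma~\ref{lem: Order Containment Implication}, whereas the paper implicitly relies on the fact (stated after Definition~\ref{def: principal and co-principal Galois orders}) that any principal Galois ring is automatically a Galois order; either route works.
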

\begin{proof}
Essentially the same as the proof of the previous corollary with the observation that
\[
(\Frac(\Lambda_1)\#\mathscr{M}_1)^{W_1}\otimes(\Frac(\Lambda_2)\#\mathscr{M}_2)^{W_2}
=((\Frac(\Lambda_1)\otimes\Frac(\Lambda_2))\#(\mathscr{M}_1\times\mathscr{M}_2))^{W_1\times W_2}
\]
based on the action of $W_1\times W_2$.
\end{proof}


\section*{Acknowledgements}
The author would like to thank his advisor Jonas Hartwig for some comments and helpful
discussion. Finally, the author would like to thank Iowa State University, where the author resided during some of the results in this paper.

\printbibliography

\end{document}